\definecolor{lightgray}{rgb}{0.8, 0.8, 0.8}
\definecolor{darkgray}{rgb}{0.7, 0.7, 0.7}
\definecolor{darkblue}{rgb}{0, 0, .4}
\newcommand{\minisec}[1]{\medskip\noindent{\bf #1.}}
\theoremstyle{plain}
\newtheorem{theorem}{Theorem}[section]
\newtheorem{proposition}[theorem]{Proposition}
\newtheorem*{proposition*}{Proposition}
\newtheorem{lemma}[theorem]{Lemma}
\newtheorem{observation}[theorem]{Observation}
\newtheorem{falseconjecture}[theorem]{False Conjecture}
\newtheorem{question}[theorem]{Question}
\newtheorem*{remark*}{Remark}
\renewenvironment{abstract}%
	{%
		\begin{list}{}%
			{%
				\setlength{\rightmargin}{1in}%
				\setlength{\leftmargin}{1in}%
			}%
			\item[]\ignorespaces
			\begin{small}
		}%
		{%
			\end{small}
			\unskip
		\end{list}
	}
\newcommand{\Av}{\operatorname{Av}}
\newcommand{\C}{\mathcal{C}}
\renewcommand{\L}{\mathcal{L}}
\renewcommand{\P}{\mathcal{P}}
\renewcommand{\P}{\mathcal{P}}
\newcommand{\gr}{\mathrm{gr}}
\newcommand{\ugr}{\overline{\gr}}
\newcommand{\exparen}[1]{{\scriptscriptstyle(}\!\!\;#1\!\!\;{\scriptscriptstyle)}}
\newcommand{\plotptradius}{4pt}
\tikzset{point/.style={circle, draw, fill=black, inner sep=0pt, minimum width=\plotptradius}}
\tikzset{openpoint/.style={circle, draw, fill=none, inner sep=0pt, minimum width=\plotptradius}}
\tikzset{empty/.style={circle, draw=none, fill=none, minimum width=0}}
\renewcommand{\d}{\mathsf{d}}
\renewcommand{\u}{\mathsf{u}}
\renewcommand{\r}{\mathsf{r}}
\newcommand{\rd}{\mathsf{r_d}}
\newcommand{\ru}{\mathsf{r_u}}
\newcommand{\alphabet}{\{\rd,\ru,\d,\u\}}
\newcommand{\norigin}{\circ}
\newcommand{\origin}{\bullet}
\title{\sc Uncountably Many Enumerations of Well-Quasi-Ordered Permutation Classes}
\author{\centering
	\begin{tabular}{ccc}
	Robert Brignall
	&\rule{0pt}{0pt}&
	Vincent Vatter%
	\footnote{Vatter's research was partially supported by the Simons Foundation via award number 636113.}
	\\[-0.25ex]
	\small School of Mathematics and Statistics
	&&
	\small Department of Mathematics\\[-0.5ex]
	\small The Open University
	&&
	\small University of Florida\\[-0.5ex]
	\small Milton Keynes, England UK
	&&
	\small Gainesville, Florida USA\\[-1.5ex]
	\end{tabular}
	\vspace{0.3in}
}
\begin{document}
\maketitle

\vspace{-0.25in}

\begin{abstract}
We construct an uncountable family of well-quasi-ordered permutation classes, each with a distinct enumeration sequence. This disproves a conjecture that all well-quasi-ordered permutation classes have algebraic generating functions, and in fact shows that many such classes lack D-finite or D-algebraic generating functions. Our construction is based on an uncountably large collection of factor-closed, well-quasi-ordered binary languages due to Pouzet.
\end{abstract}

\pagestyle{main}

\section{Introduction}

Well-quasi-order is a fundamental structural property of permutation classes. Until recently, it was conceivable that only countably many classes could possess this property, but recent work of Oudrar, Pouzet, and Zaguia~\cite{oudrar:minimal-prime-ages:} established that there are in fact uncountably many well-quasi-ordered permutation classes.
Our main result goes further.
We show that one can find uncountably many well-quasi-ordered permutation classes with distinct enumerations, which implies that these enumerations cannot all be algebraic, D-finite, or otherwise well-behaved. We present our main result first, followed by definitions and context.

\begin{theorem}
\label{thm-wqo-not-algebraic}
There are uncountably many distinct enumerations of well-quasi-ordered permutation classes.
\end{theorem}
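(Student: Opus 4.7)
The plan is to reduce Theorem~\ref{thm-wqo-not-algebraic} to a combinatorial statement about the subword complexity of infinite binary sequences, and then to lift that statement into permutation classes via a grid-style encoding. The first step is to exhibit an uncountable family $\{\LL_\alpha : \alpha \in I\}$ of factor-closed languages over $\{0,1\}$ whose factor-counting functions $n \mapsto |\LL_\alpha \cap \{0,1\}^n|$ are pairwise distinct. Pure Sturmian languages will not do, since they all share complexity $n+1$, but one can thicken or perturb Sturmian (or Beatty-type) languages using a real parameter in such a way that the counts differ on infinitely many lengths. Since there are only countably many sequences of nonnegative integers with polynomial growth of a fixed degree, a pigeonhole argument applied to a continuously parametrised family produces an uncountable subfamily with pairwise distinct complexity functions — and in particular distinct, yet still linear, growth rates.

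The second step is to encode each such language $\LL_\alpha$ as a permutation class $\C_\alpha$. I would use a simple geometric construction — say, a two-cell staircase (or a $1 \times k$ monotone grid with a small number of cells) — where binary words of length $n$ correspond to permutations of length $n$. The class $\C_\alpha$ is then defined to be the downward closure of the set of permutations encoded by the words of $\LL_\alpha$. Factor-closedness of $\LL_\alpha$ is exactly what is needed to make this downward closure agree with the image of $\LL_\alpha$ plus some short ``boundary'' patterns, so the enumeration $|\C_\alpha \cap S_n|$ is a fixed, language-independent linear combination of the factor counts $|\LL_\alpha \cap \{0,1\}^k|$ for $k \le n$. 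Distinct complexity functions therefore produce distinct enumeration sequences. Well-quasi-order for $\C_\alpha$ follows from the fact that the ambient geometric grid class is WQO (by Higman-type arguments on finite-alphabet encodings), and WQO passes to subclasses.

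The main obstacle, which I would handle carefully, is making the word-to-permutation encoding injective and compatible with the pattern order: different binary words over a symmetric grid can encode the same permutation, which would collapse the distinctions between the $\LL_\alpha$. I would resolve this by picking an asymmetric geometry — for example, cells of different orientations together with an anchoring point — so that the encoding is injective on words of each length, and then verifying that subpermutations of encoded permutations correspond precisely to factors of the encoded words. Once this injectivity and the subword-to-subpattern correspondence are established, the enumeration of $\C_\alpha$ reads off the complexity function of $\LL_\alpha$ directly, and the uncountable family from the first step transports to uncountably many distinct WQO enumerations, proving the theorem.
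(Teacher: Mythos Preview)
Your overall strategy---encode an uncountable family of factor-closed binary languages as permutation classes and read off enumerations---is exactly the paper's plan. But two of your load-bearing claims are wrong, and fixing them is most of the work.

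First, the WQO step is self-defeating as stated. You write that each $\C_\alpha$ is WQO because the ambient geometric grid class is WQO and WQO passes to subclasses. But a WQO permutation class has only \emph{countably} many subclasses: any subclass is determined by the minimal elements of its complement, which form an antichain, hence a finite set. So if your ambient class were WQO you could not possibly produce uncountably many subclasses, let alone uncountably many enumerations. The paper does \emph{not} appeal to an ambient WQO class. Instead it shows that each individual language $\L^{\exparen{s_k}}$ is WQO under the \emph{factor} order (this is a non-trivial property of Pouzet's construction, Proposition~\ref{prop-Lsk-wqo}), and then lifts WQO to $\C^{\exparen{s_k}}$ via Higman's lemma applied to \emph{sequences} of words from $\P^{\exparen{s_k}}$. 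Your vague ``perturbed Sturmian'' languages have no reason to be WQO under the factor order, and without that property you have no WQO argument for the permutation classes.

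Second, your claim that ``subpermutations of encoded permutations correspond precisely to factors of the encoded words'' is false for any reasonable encoding, and in particular for the pin-sequence encoding the paper uses. Deleting an interior entry of a permutation does not correspond to taking a factor of its word; it splits the word in two. The paper handles this with the $\boxplus$ decomposition (Proposition~\ref{prop-boxplus-decomp} and Theorem~\ref{thm-Csk-description}): every subpermutation of $\psi^\norigin_w$ is a $\boxplus$-concatenation $\psi^\norigin_{w_1}\boxplus\psi^\origin_{w_2}\boxplus\cdots\boxplus\psi^\origin_{w_k}$ of pin sequences coming from \emph{several} factors of $w$. This is why Higman's lemma on $(\P^{\exparen{s_k}})^\ast$ is needed for WQO, and why the distinct-enumeration argument (Theorem~\ref{thm-distinct-enum}) requires the uniqueness results of Section~\ref{sec-indecomposable} rather than a simple bijection between words and permutations.
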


Our proof of Theorem~\ref{thm-wqo-not-algebraic} builds on a construction by Pouzet of factor-closed sets of binary words, adapted here to the setting of permutation classes.

We now provide the necessary definitions and context from the study of permutation classes. We think of permutations in one-line notation, so a \emph{permutation of length $n$} is simply an ordering of the set $\{1,2,\dots,n\}$. We work with the classical, non-consecutive, permutation patterns, so the permutation~$\pi$ \emph{contains} the permutation~$\sigma$ of length $k$ if~$\pi$ has a subsequence of length $k$ that is \emph{order isomorphic} to~$\sigma$, by which we mean that the subsequence has the same pairwise comparisons as~$\sigma$. If $\pi$ contains $\sigma$, we sometimes say that~$\sigma$ is a \emph{subpermutation} of~$\pi$.

For example,~$\pi=372694185$ contains~$\sigma=32514$, as witnessed by its subsequence $32918$, but~$\pi$ avoids $54321$ because it has no decreasing subsequence of length five. This containment is illustrated in Figure~\ref{fig-pattern-containment}, where $\pi$ is plotted as a function.

\begin{figure}
\begin{center}
\begin{tikzpicture}[scale=0.3]
	\begin{scope}[shift={(0,0)}]
		\draw (0,0) rectangle (10,10);
		
		\node[point] (b1) at (1,3) {};
		\node[point] (b2) at (2,7) {};
		\node[point] (b3) at (3,2) {};
		\node[point] (b4) at (4,6) {};
		\node[point] (b5) at (5,9) {};
		\node[point] (b6) at (6,4) {};
		\node[point] (b7) at (7,1) {};
		\node[point] (b8) at (8,8) {};
		\node[point] (b9) at (9,5) {};
		
		\draw (b1) circle [radius=0.6];
		\draw (b3) circle [radius=0.6];
		\draw (b5) circle [radius=0.6];
		\draw (b7) circle [radius=0.6];
		\draw (b8) circle [radius=0.6];
		
		\node at (1,0) [below] {$3$};
		\node at (2,0) [below] {$7$};
		\node at (3,0) [below] {$2$};
		\node at (4,0) [below] {$6$};
		\node at (5,0) [below] {$9$};
		\node at (6,0) [below] {$4$};
		\node at (7,0) [below] {$1$};
		\node at (8,0) [below] {$8$};
		\node at (9,0) [below] {$5$};
	\end{scope}

	\node at (15,5) {\Large$\ge$};

	\begin{scope}[shift={(20,0)}]
		\draw (0,0) rectangle (10,10);
		
		\node[point] (a1) at (1,3) {};
		\node[point] (a2) at (3,2) {};
		\node[point] (a3) at (5,9) {};
		\node[point] (a4) at (7,1) {};
		\node[point] (a5) at (8,8) {};
		
		\node at (1,0) [below] {$3$};
		\node at (3,0) [below] {$2$};
		\node at (5,0) [below] {$9$};
		\node at (7,0) [below] {$1$};
		\node at (8,0) [below] {$8$};
	\end{scope}

\end{tikzpicture}
\end{center}
\caption{The permutation $372694185$ contains the permutation $32514$.}
\label{fig-pattern-containment}
\end{figure}

A \emph{permutation class} is a downset of permutations in this order. In other words, if $\C$ is a permutation class,~$\pi\in\C$, and~$\sigma\le\pi$, then we must also have~$\sigma\in\C$. Every permutation class can be described by a unique \emph{basis}, which consists of the minimal permutations \emph{not} in the class. The basis of a class is necessarily an \emph{antichain}, meaning that none of its members is contained in another. As there are infinite antichains of permutations, there are infinitely-based permutation classes. Given an antichain $B$ of permutations, the class with $B$ as its basis is denoted by $\Av(B)$. A permutation class is \emph{well-quasi-ordered} (\emph{wqo} for short, or \emph{belordonn{\'e}} in French) if it does not contain an infinite antichain.

With $\C_n$ denoting the set of permutations of length $n$ in the class $\C$ and $|\pi|$ denoting the length of a permutation~$\pi$, the generating function of $\C$ is
\[
	\sum_{n\ge 0} |\C_n|x^n=\sum_{\pi\in\C} x^{|\pi|}.
\]
We are often interested in whether a generating function~$f(x)$ is \emph{rational} (if~$f(x)=p(x)/q(x)$ for polynomials $p$ and $q$), \emph{algebraic} (if there is a non-zero polynomial~$p(x,y)\in\mathbb{Q}[x,y]$ such that $p(x,f(x))=0$), or \emph{D-finite} (if $f(x)$ and its derivatives span a finite dimensional vector space over~$\mathbb{Q}(x)$).

In their 1996 paper, Noonan and Zeilberger~\cite{noonan:the-enumeration:} conjectured that every finitely-based permutation class has a D-finite generating function. Note that the finite basis hypothesis is clearly necessary: because there are infinite antichains of permutations, there are uncountably many different enumerations of permutation classes, but only countably many D-finite generating functions with rational coefficients. The Noonan--Zeilberger conjecture had begun to seem unlikely, and Zeilberger himself conjectured that it was false in 2005~\cite{elder:problems-and-co:}. The conjecture was finally disproved by Garrabrant and Pak in 2015~\cite{garrabrant:permutation-pat:}.%
\footnote{The proof of Garrabrant and Pak does not provide a concrete counterexample to the Noonan--Zeilberger conjecture. Many believe that the class of $1324$-avoiding permutations does not have a D-finite generating function because it is suspected to have the wrong asymptotics for such a generating function~\cite{conway:on-the-growth-r:,conway:1324-avoiding-p:}. The same type of analysis applies to several other classes avoiding a single permutation of length five, as shown by Clisby, Conway, Guttmann, and Inoue~\cite{clisby:classical-lengt:}. That said, for the sake of providing a concrete counterexample to the Noonan--Zeilberger conjecture, it may be preferable to consider classes with more basis elements, and thus more structure. Several potential counterexamples along these lines were identified in the work of Albert, Homberger, Pantone, Shar, and Vatter~\cite{albert:generating-perm:}. Others have later been identified by B\'ona and Pantone~\cite{bona:permutations-av:long-mono}.}
In an interview published in 2021~\cite{mansour:interview-with-:stanley}, Stanley singled out the results of Garrabrant and Pak, saying:
\begin{quote}
a topic within enumerative combinatorics that seems ripe for further investigation is developing a theory for showing that a given generating function (in one variable) does \emph{not} have some desirable property, such as being D-finite or differentially algebraic. There are a number of results in this area, but nothing approaching a general theory. The most significant work in this direction (in my opinion) is the disproof of the Noonan--Zeilberger conjecture by Scott Garrabrant and Igor Pak.
\end{quote}

Thus we know by elementary cardinality arguments that some permutation classes fail to have well-behaved enumerations, and the Garrabrant--Pak result shows that this fails even for some \emph{finitely-based} permutation classes. In a similar direction, at the same conference at which he conjectured that the Noonan--Zeilberger conjecture was false, Zeilberger asked for necessary and sufficient conditions for classes to have rational, algebraic, D-finite, or similarly structured generating functions~\cite{elder:problems-and-co:}. However, the results of Albert, Brignall, and Vatter~\cite{albert:large-infinite-:} indicate that this question is likely to be intractable, as having a rational generating function provides little structural information; classes with rational generating functions can be just as complex as arbitrary permutation classes.

\begin{theorem}[Albert, Brignall, and Vatter~\cite{albert:large-infinite-:}]
\label{thm-contained-rational}
Every permutation class except for the class of all permutations is contained in a class with a rational generating function.%
\footnote{While Theorem~\ref{thm-contained-rational} implies the Marcus--Tardos theorem (formerly the Stanley--Wilf conjecture, stating that all proper permutation classes grow at most exponentially), the proof of Theorem~\ref{thm-contained-rational} uses this result.}
\end{theorem}

\begin{figure}[t]
    \centering
    \setlength{\fboxrule}{0.4pt} 
    \setlength{\fboxsep}{0pt}    
    \fbox{\includegraphics[width=1.49in]{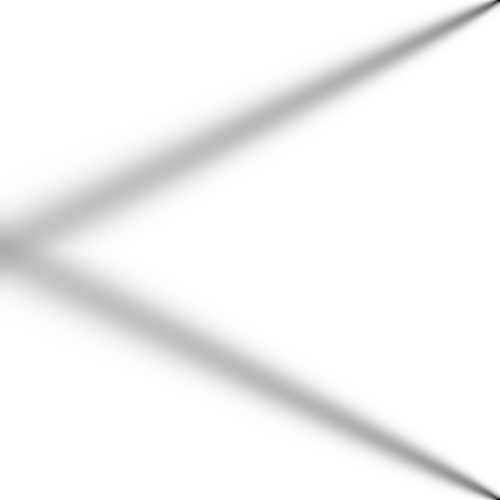}}
    \qquad
    \fbox{\includegraphics[width=1.49in]{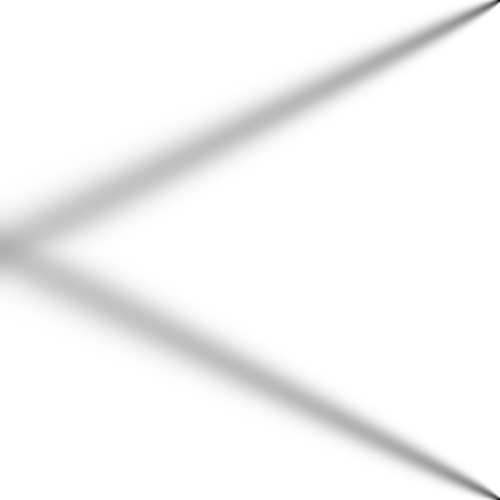}}
    \qquad
    \fbox{\includegraphics[width=1.49in]{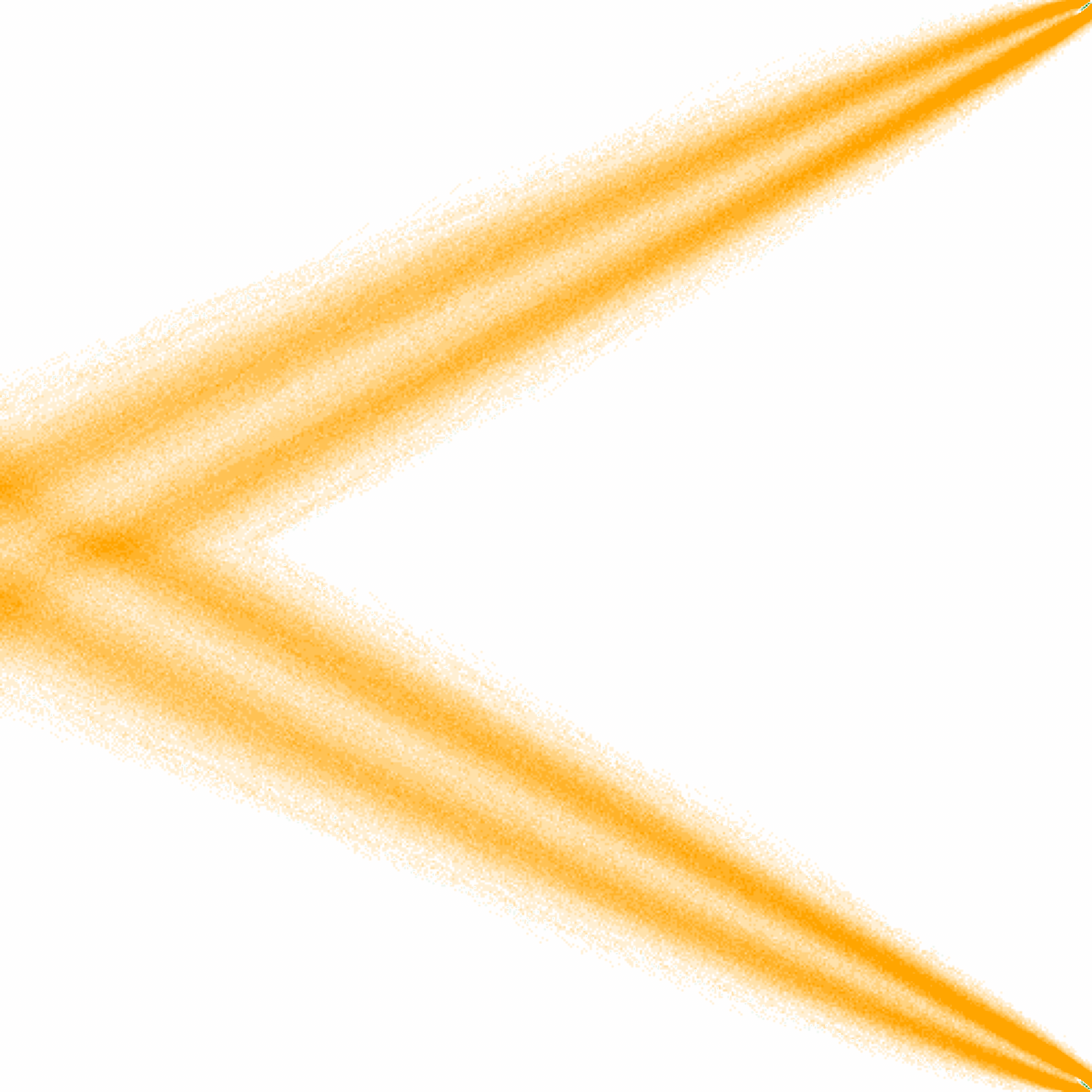}}
    \bigskip
	\caption{A heatmap of the class $\Av(1432, 4123)$ on the left, of its subclass $\Av(1432, 4123, 13452, 14523, 24513, 42153, 52143, 53214)$ in the middle, and an image of the difference of these two heatmaps on the right. All of the classes constructed here lie in both of these classes. The heatmaps were created by Jay Pantone by sampling one million permutations of length $500$ uniformly at random, using Combinatorial Exploration~\cite{albert:combinatorial-e:}.}
	\label{fig-heatmap-1432-4123}
\end{figure}

Indeed, the classes we construct to prove Theorem~\ref{thm-wqo-not-algebraic} all lie within $\Av(1432, 4123)$; this class not only has a rational generating function,%
\footnote{To see this, note that by reversing and then inverting the members of the class, we obtain the symmetric class $\Av(4123, 3214)$. Since every permutation of length at least~$5$ contains $123$ or $321$, there can be at most~$5$ places in which to insert a new maximum into a member of $\Av(4123, 3214)$. This implies not only  that this class has a regular insertion encoding~\cite{albert:the-insertion-e:,vatter:finding-regular:}, but that it even has a finitely labelled generating tree~\cite{vatter:finitely-labele:}; indeed, this generating tree was first computed in 2003 by Kremer and Shiu~\cite[Proposition~10]{kremer:finite-transiti:}. To automatically compute the generating function for such a class now, one would want to employ Combinatorial Exploration~\cite{albert:combinatorial-e:}.} but its enumeration is given by the remarkably simple formula~${(4^{n-1} + 2)/3}$. The heatmap for this class shown on the left of Figure~\ref{fig-heatmap-1432-4123} also speaks to its simple structure. All our construction requires of this class is that it admit an interpretation of the factor order on binary words, and in fact the classes we construct all lie in the downward closure of the set of rightward-yearning pin sequences (see Section~\ref{sec-rightward-yearning}). Theorem~\ref{thm-rightward-basis-enum}, due to Jarvis~\cite{jarvis:pin-classes:}, shows that this class is
\[
	\Av(1432, 4123, 13452, 14523, 24513, 42153, 52143, 53214),
\]
and also has a simple, rational, generating function. A heatmap for this class is shown in the center of Figure~\ref{fig-heatmap-1432-4123}. Indeed, the downward closure of the set of \emph{all} pin sequences was shown by Bassino, Bouvel, and Rossin~\cite{bassino:enumeration-of-:} to have a rational generating function as well.

We say that a permutation class is \emph{strongly algebraic} (or \emph{h{\'e}r{\'e}ditairement alg{\'e}brique} in French~\cite{oudrar:sur-lenumeratio:}) if it and all its subclasses have algebraic generating functions. By an elementary counting argument, it follows that strongly-algebraic permutation classes must be wqo. Thus it was natural to conjecture the converse, which appears to have been first stated in the literature by the second author.

\begin{falseconjecture}[Vatter~{\cite[Conjecture 12.3.4]{vatter:permutation-cla:}}]
\label{conj-wqo-algebraic}
A permutation class is strongly algebraic if and only if it is well-quasi-ordered.
\end{falseconjecture}

Theorem~\ref{thm-wqo-not-algebraic} shows that this conjecture is false, because there are too many wqo permutation classes with distinct enumerations for them all to have algebraic, D-finite, or otherwise well-behaved generating functions.%
\footnote{Note that the classes we construct in the proof of Theorem~\ref{thm-wqo-not-algebraic} are almost certainly infinitely based, and thus are unlikely to represent additional counterexamples to Noonan and Zeilberger's false conjecture.}

We remark that while our results show that wqo is not enough to guarantee algebraicity, a stronger notion known as \emph{labelled well-quasi-order} (\emph{lwqo} for short, or \emph{h{\'e}r{\'e}ditairement belordonn{\'e}} in French) might be. While this notion is implicit in the classic papers of Higman~\cite{higman:ordering-by-div:} and Kruskal~\cite{kruskal:well-quasi-orde:}, it was not until the work of Pouzet in the 1970s (in particular, his 1972 paper~\cite{pouzet:un-bel-ordre-da:}) that it was studied explicitly. It has been shown that lwqo nicely explains many structural results about permutation classes~\cite{brignall:labelled-well-q:}. Rather than risk another false conjecture, we raise the following as a question.

\begin{question}
\label{ques-lwqo-algebraic}
Does every lwqo permutation class have an algebraic generating function?
\end{question}

At the very least, Question~\ref{ques-lwqo-algebraic} will not fall to an argument such as the one we use here to disprove False Conjecture~\ref{conj-wqo-algebraic}, as there are only countably many lwqo permutation classes, which is a corollary of them all being finitely based, see~\cite[Proposition 2.3]{brignall:labelled-well-q:}. A slightly weaker version of Question~\ref{ques-lwqo-algebraic} has been conjectured by Oudrar~\cite[page 87]{oudrar:sur-lenumeratio:}.

Since lwqo is a stronger property than wqo and also ensures that classes are finitely based (although the converse does not hold; see Brignall, Engen, and Vatter~\cite{brignall:a-counterexampl:}), a positive answer to the following, strictly stronger question would resolve Question~\ref{ques-lwqo-algebraic}. Although we suspect the answer is negative, no counterexamples are currently known.

\begin{question}
\label{ques-wqo-fb-algebraic}
Does every finitely-based wqo permutation class have an algebraic generating function?
\end{question}

The remainder of this paper is organized as follows. In Section~\ref{sec-pouzet-factors}, we present Pouzet’s construction and the properties of the resulting (uncountably large) collection of factor-closed binary languages. Sections~\ref{sec-rightward-yearning} and~\ref{sec-construction} describe how these languages can be used to construct permutation classes, via pin sequences. We analyze the decomposition properties of the resulting classes in Sections~\ref{sec-decomp} and~\ref{sec-indecomposable}, and establish that each such class is well-quasi-ordered in Section~\ref{sec-wqo}. In Section~\ref{sec-enum}, we show that these classes have distinct enumeration sequences, completing the proof of our main result. The paper concludes with some final remarks in Section~\ref{sec-concluding}.

\section{Words Under the Factor Order}
\label{sec-pouzet-factors}
\newcommand{\alphabar}{\overline{\alpha}}

The construction described in this section is adapted from Pouzet's thesis~\cite[page~64]{pouzet:sur-la-theorie-:} though specialized to the binary alphabet $\{0,1\}$ here. We are interested in the \emph{factor order} on the set of all finite binary words, $\{0,1\}^\ast$, in which $u\le w$ if~$u$ occurs as a \emph{factor} (or \emph{consecutive subword}) of~$w$. In other words,~$u\le w$ in this order if~$w=w_1uw_2$ for possibly empty words~$w_1,w_2\in\{0,1\}^\ast$. This poset is \emph{not} wqo in general, as witnessed by the infinite antichain $11$, $101$, $1001$, $\dots$.
For a binary word $w=w(1)\cdots w(n)$, the \emph{complement} (or \emph{ones complement}) is the word formed by inverting every letter of~$w$, so
\[
	\overline{w}
	=
	\overline{w(1)}
	\cdots
	\overline{w(n)}
	=
	(1-w(1))\cdots (1-w(n)).
\]

We now describe Pouzet's construction. While it takes a bit of preparation, this will eventually give us uncountably many binary wqo languages, each closed downward in the factor order and with a distinct enumeration sequence.

Let $(s_k)_{k\in\mathbb{N}}$ be a sequence of positive integers. We construct an infinite sequence~$(\alpha^{\exparen{s_k}}_i)_{i\in\mathbb{N}}$ of binary words by setting
\[
	\alpha^{\exparen{s_k}}_1=01
\]
and, for $i\ge 1$, defining
\[
		\alpha^{\exparen{s_k}}_{i+1}
			=
			\underbrace{\alpha^{\exparen{s_k}}_{i}\cdots \alpha^{\exparen{s_k}}_{i}}_{\text{$s_{i}$ copies}}\,
			\underbrace{\alphabar^{\exparen{s_k}}_{i}\cdots \alphabar^{\exparen{s_k}}_{i}}_{\text{$s_{i}$ copies}}
			=
			\left(\alpha^{\exparen{s_k}}_{i}\right)^{s_i}
			\left(\alphabar^{\exparen{s_k}}_{i}\right)^{s_i}.
\]

In the case where $(s_k)=(1,1,1,\dots)$, the sequence~$\left(\alpha^{\exparen{s_k}}_i\right)$ consists of prefixes of the well-known Prouhet--Thue--Morse sequence (see~\cite{allouche:the-ubiquitous-:}):
\[
	\begin{array}{rl}
	i	&\alpha^{(1,1,1,\dots)}_i	\\[2pt]
	\hline\\[-10pt]
	1	&01						\\
	2	&01\,10					\\
	3	&0110\,1001				\\
	4	&01101001\,10010110	
	\end{array}
\]
This example illustrates two features of the construction that hold for arbitrary sequences~$(s_k)$. First, we can read the lengths of these words directly from the construction:
\[
	|\alpha^{\exparen{s_k}}_i|= 2^is_1s_2\cdots s_{i-1}.
\]
Second, the first letter of~$\alpha^{\exparen{s_k}}_i$ is always~$0$, while the last letter alternates with $i$.

\begin{observation}\label{obs-last-letters}
The last letter of~$\alpha^{\exparen{s_k}}_i$ is 0 if and only if $i$ is even.
\end{observation}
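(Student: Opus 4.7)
The plan is to prove the observation by straightforward induction on $i$, exploiting the recursive definition of $\alpha_i^{\exparen{s_k}}$.

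For the base case $i=1$, I would simply note that $\alpha_1^{\exparen{s_k}}=01$ ends in $1$, which matches the claim since $1$ is odd.

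For the inductive step, suppose the statement holds for some $i\ge 1$. From the recursive definition
\[
	\alpha_{i+1}^{\exparen{s_k}}
		=
		\left(\alpha^{\exparen{s_k}}_{i}\right)^{s_i}
		\left(\alphabar^{\exparen{s_k}}_{i}\right)^{s_i},
\]
the word $\alpha_{i+1}^{\exparen{s_k}}$ ends with the final copy of $\overline{\alpha}_i^{\exparen{s_k}}$ (here I use $s_i\ge 1$, which holds since $(s_k)$ is a sequence of positive integers). Consequently the last letter of $\alpha_{i+1}^{\exparen{s_k}}$ is the complement of the last letter of $\alpha_i^{\exparen{s_k}}$. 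By the inductive hypothesis, the last letter of $\alpha_i^{\exparen{s_k}}$ is $0$ precisely when $i$ is even, so its complement is $0$ precisely when $i$ is odd, i.e., when $i+1$ is even. This completes the induction.

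There is no real obstacle here: the proof amounts to reading off the last symbol of the recursive formula and toggling parity. The only thing that needs to be flagged is the implicit use of $s_i\ge 1$ to ensure that $\alpha_{i+1}^{\exparen{s_k}}$ genuinely ends in a copy of $\overline{\alpha}_i^{\exparen{s_k}}$ rather than collapsing to $\alpha_i^{\exparen{s_k}}$ itself, but this is immediate from the hypothesis that $(s_k)$ is a sequence of positive integers.
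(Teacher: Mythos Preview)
Your proof is correct. The paper does not give a formal proof of this observation at all---it is stated as self-evident, following the remark that ``the last letter alternates with $i$''---so your induction is simply the natural way to verify the claim, and there is nothing to compare.
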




%

Of particular interest to us is the language
\[
	\L^{\exparen{s_k}}
	=
	\{w : \text{$w$ is a factor of~$\alpha^{\exparen{s_k}}_i$ for some $i\ge 1$}\}.
\]
By definition, we include the empty word $\varepsilon$ in~$\L^{\exparen{s_k}}$. 

Note that for any sequence $(s_k)$ and indices $i\le j$, the word~$\alpha^{\exparen{s_k}}_j$ can be expressed as a sequence of concatenations of the words~$\alpha^{\exparen{s_k}}_i$ and~$\alphabar^{\exparen{s_k}}_i$. That is, if~${i\le j}$, then~$\alpha^{\exparen{s_k}}_j$ can be expressed as a word over the alphabet $\{\alpha^{\exparen{s_k}}_i, \alphabar^{\exparen{s_k}}_i\}$, and we use this viewpoint several times in what follows. One consequence is that if~$w$ is a factor of~$\alpha^{\exparen{s_k}}_i$ for some $i$, then it is also a factor of~$\alpha^{\exparen{s_k}}_j$ for all~$j\ge i$. Thus, the language~$\L^{\exparen{s_k}}$ can also be defined as the set of all words that appear as factors of~$\alpha^{\exparen{s_k}}_i$ for all sufficiently large values of $i$, rather than simply one value of $i$.

In the other direction, the next result, which is also essentially from Pouzet's thesis, guarantees that 
all sufficiently long words in~$\L^{\exparen{s_k}}$ actually contain~$\alpha^{\exparen{s_k}}_i$.

\begin{proposition}\label{prop-contains-alpha}
For every sequence $(s_k)$ of positive integers, there exists a function $f^{\exparen{s_k}}(i)$ such that every word in~$w\in \L^{\exparen{s_k}}$ of length at least $f^{\exparen{s_k}}(i)$ contains~$\alpha^{\exparen{s_k}}_i$ as a factor.
\end{proposition}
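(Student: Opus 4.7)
The plan is to take $f^{\exparen{s_k}}(i) = 2\,|\alpha^{\exparen{s_k}}_{i+1}|$ and argue directly, with no induction on $i$, using only the block-decomposition observation recorded in the paragraph just before the statement of the proposition. For readability I drop the superscript $\exparen{s_k}$ below.

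First I would observe that if $w\in\L^{\exparen{s_k}}$ satisfies $|w|\ge 2|\alpha_{i+1}|$ and $w$ is a factor of $\alpha_j$, then necessarily $j\ge i+2$, since $|w|>|\alpha_{i+1}|=|\alpha_j|$ would be violated for any smaller $j$. The excerpt has already noted that for such $j$ the word $\alpha_j$ can be expressed over the alphabet $\{\alpha_{i+1},\alphabar_{i+1}\}$; that is, $\alpha_j$ partitions into consecutive blocks of length $|\alpha_{i+1}|$, each block being either $\alpha_{i+1}$ or its complement $\alphabar_{i+1}$. (This is immediate from the recurrence $\alpha_{m+1}=\alpha_m^{s_m}\alphabar_m^{s_m}$ together with the fact that the complement of a word built from $\alpha_{i+1}$ and $\alphabar_{i+1}$ is again such a word.)

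Next I would invoke the elementary counting fact that a factor of length at least $2B$ of a word partitioned into blocks of length $B$ must contain at least one block entirely within itself, since only the leftmost and rightmost blocks it meets can be partial. Taking $B=|\alpha_{i+1}|$, this exhibits inside $w$ a fully contained block which is either $\alpha_{i+1}$ or $\alphabar_{i+1}$.

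To conclude, I would verify that \emph{both} possibilities contain $\alpha_i$ as a factor. Directly from the construction, $\alpha_{i+1} = \alpha_i^{s_i}\alphabar_i^{s_i}$ begins with $\alpha_i$, while $\alphabar_{i+1} = \alphabar_i^{s_i}\alpha_i^{s_i}$ ends with $\alpha_i$ (here using that $s_i\ge 1$). Either way $w$ contains $\alpha_i$ as a factor, and the proof is complete. There is no real obstacle in the argument; the one step worth flagging is this final verification, because one must handle the case when the fully contained block happens to be the complement $\alphabar_{i+1}$ rather than $\alpha_{i+1}$ itself, a case over which we have no a priori control.
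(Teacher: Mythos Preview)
Your proof is correct and follows essentially the same approach as the paper: both take $f^{\exparen{s_k}}(i)=2|\alpha_{i+1}|$, use the block decomposition of $\alpha_j$ over $\{\alpha_{i+1},\alphabar_{i+1}\}$, observe that a factor of length at least twice the block length must contain a full block, and then check that both $\alpha_{i+1}$ and $\alphabar_{i+1}$ contain $\alpha_i$. The only cosmetic difference is that you deduce $j\ge i+2$ from $|w|>|\alpha_{i+1}|$, whereas the paper more modestly deduces $j\ge i+1$ from $|w|>|\alpha_i|$; either suffices.
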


\begin{proof}
Let~$\alpha=\alpha^{\exparen{s_k}}_{i+1}$. We claim that we can take
\[
	f^{\exparen{s_k}}(i)=2|\alpha|=2^{i+2}s_1s_2\cdots s_{i}.
\]
To see this, let~$w\in\L^{\exparen{s_k}}$ have length at least $f^{\exparen{s_k}}(i)$. Because~$w\in\L^{\exparen{s_k}}$, $w$ must be a factor of~$\alpha^{\exparen{s_k}}_j$ for some $j$, and since
\[
	|w|\ge f^{\exparen{s_k}}(i)>|\alpha^{\exparen{s_k}}_i|,
\]
we see that~$w$ must be a factor of~$\alpha^{\exparen{s_k}}_j$ for some $j\ge i+1$. All such words are composed of concatenations of~$\alpha$ with its complement~$\alphabar$, and thus~$w$ must contain either~$\alpha$ or~$\alphabar$ as a factor, since~$w$ is at least twice as long as both. The result follows because each of~$\alpha$ and~$\alphabar$ contain~$\alpha^{\exparen{s_k}}_i$ as a factor.
\end{proof}

It should be noted that Proposition~\ref{prop-contains-alpha} could be regarded as an instance of "uniform recurrence", if we were to consider a suitable limit object (that is, an infinite word, like the Prouhet--Thue--Morse sequence) for the construction. However, the above statement suffices for our purposes.

That these languages are wqo now follows readily from Proposition~\ref{prop-contains-alpha}.

\begin{proposition}
\label{prop-Lsk-wqo}
For every sequence $(s_k)$ of positive integers, the set~$\L^{\exparen{s_k}}\subseteq\{0,1\}^\ast$ of words is wqo under the factor order.
\end{proposition}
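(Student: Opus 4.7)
The plan is to deduce wqo directly from Proposition~\ref{prop-contains-alpha}. Recall that $\L^{\exparen{s_k}}$ being wqo amounts to showing that every infinite sequence $w_1, w_2, \ldots$ of words in $\L^{\exparen{s_k}}$ has indices $m < n$ with $w_m \le w_n$ in the factor order. (Equivalently, since the factor order on binary words is clearly well-founded, it suffices to show that $\L^{\exparen{s_k}}$ contains no infinite antichain.)

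First I would dispose of the trivial case by reducing to a subsequence on which the lengths $|w_n|$ are strictly increasing. Since there are only finitely many binary words of any bounded length, either some word is repeated infinitely often in $(w_n)$---in which case any two occurrences give $w_m = w_n$ and so $w_m \le w_n$---or we can extract an infinite subsequence of pairwise distinct words with strictly increasing lengths.

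Next, I would use the defining property of $\L^{\exparen{s_k}}$: the first word $w_1$ of this subsequence is a factor of $\alpha^{\exparen{s_k}}_i$ for some $i\ge 1$. Applying Proposition~\ref{prop-contains-alpha} with this value of $i$, for every sufficiently large $n$ we have $|w_n| \ge f^{\exparen{s_k}}(i)$, so $w_n$ contains $\alpha^{\exparen{s_k}}_i$ as a factor. By transitivity of the factor order, $w_1 \le \alpha^{\exparen{s_k}}_i \le w_n$ for every such $n$, producing a comparable pair with $m = 1$ and completing the argument.

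There is no real obstacle at this stage: once Proposition~\ref{prop-contains-alpha} has been established, the wqo conclusion follows by combining two facts, namely that every element of $\L^{\exparen{s_k}}$ is a factor of some $\alpha^{\exparen{s_k}}_i$ (by definition) and that every sufficiently long element of $\L^{\exparen{s_k}}$ contains $\alpha^{\exparen{s_k}}_i$ as a factor. All of the substantive content has been absorbed into Proposition~\ref{prop-contains-alpha}.
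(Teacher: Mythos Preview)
Your argument is correct and is essentially the same as the paper's: both pick $i$ so that $w_1$ is a factor of $\alpha^{\exparen{s_k}}_i$, invoke Proposition~\ref{prop-contains-alpha} to force any sufficiently long $w_n$ to contain $\alpha^{\exparen{s_k}}_i$ (and hence $w_1$), and conclude. The only cosmetic difference is that you phrase it for an arbitrary infinite sequence with a preliminary reduction to increasing lengths, whereas the paper argues directly by contradiction from a putative infinite antichain.
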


%
%

\begin{proof}
Suppose to the contrary that there were an infinite antichain~$w_1$,~$w_2$, $\dots \in \L^{\exparen{s_k}}$. 
The word~$w_1$ is contained in~$\alpha^{\exparen{s_k}}_i$ for some index $i$ because it lies in~$\L^{\exparen{s_k}}$. Letting $f^{\exparen{s_k}}$ denote the function from Proposition~\ref{prop-contains-alpha}, we see that every word in~$\L^{\exparen{s_k}}$ of length at least $f^{\exparen{s_k}}(i)$ contains~$\alpha^{\exparen{s_k}}_i$ as a factor, and hence also contains~$w_1$ as a factor. It follows that there is some index~$j$ such that~${|w_j|\geq f^{\exparen{s_k}}(i)}$, from which we conclude that~$w_1$ is a factor of~$w_j$, contradicting our assumption that these words form an infinite antichain.
\end{proof}

Pouzet~\cite{pouzet:sur-la-theorie-:} established that $\L^{\exparen{s_k}}$ and~$\L^{\exparen{t_k}}$ are distinct languages whenever $(s_k)$ and $(t_k)$ are distinct sequences. As there are uncountably many sequences of positive integers, there must then be uncountably many wqo factor-closed languages over a binary alphabet. For our enumerative goal, we require something a bit more precise: not only must~$\L^{\exparen{s_k}}$ and~$\L^{\exparen{t_k}}$ be distinct languages, but there must be some length where they contain a different number of words. In fact, the result we prove is stronger still: the set of words of some given length in one language is a \emph{proper subset} of the words of that length in the other.

Before we state and prove this, however, we require a technical characterisation of the possible embeddings of~$\alpha^{\exparen{s_k}}_i$ in~$\alpha^{\exparen{s_k}}_j$ for given $i$ and any $j>i$. We again adopt the viewpoint that~$\alpha_j^{\exparen{s_k}}$ can be regarded as a word over the alphabet $\{\alpha_i^{\exparen{s_k}},\alphabar_i^{\exparen{s_k}}\}$, but to ease exposition let us use~$\alpha_i^\ast$ and~$\alphabar_i^\ast$ to denote the \emph{letters}, and~$\alpha_j^\ast$ the word over $\{\alpha_i^\ast,\alphabar_i^\ast\}$,
with the property that after performing the substitutions~$\alpha_i^\ast\mapsto \alpha_i^{\exparen{s_k}}$ and~$\alphabar_i^\ast\mapsto\alphabar_i^{\exparen{s_k}}$, the word $\alpha_j^\ast$ expands to the binary word~$\alpha_j^{\exparen{s_k}}$.

\begin{proposition}\label{prop-alpha-i-embeddings}%
For every sequence $(s_k)$ of positive integers and integers $i<j$, the only factors of~$\alpha^{\exparen{s_k}}_j$ that are equal to~$\alpha^{\exparen{s_k}}_i$ are given by 
\begin{enumerate}
\item[(i)] the terms corresponding to a letter~$\alpha^{\ast}_i$ of~$\alpha_j^\ast$, and 
\item[(ii)] the middle terms corresponding to a pair of letters~$\alphabar^{\ast}_i\alphabar^{\ast}_i$ in~$\alpha_j^\ast$.
\end{enumerate}
An analogous statement holds for factors of~$\alpha_j^{\exparen{s_k}}$ that are equal to~$\alphabar^{\exparen{s_k}}_i$.
\end{proposition}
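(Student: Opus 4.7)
The plan is to prove the proposition by simultaneous induction on $i$, establishing both the statement for $\alpha_i^{\exparen{s_k}}$-factors and its analogue for $\alphabar_i^{\exparen{s_k}}$-factors at each step. I abbreviate $\alpha_i^{\exparen{s_k}}$ as $\alpha_i$ and write $L = |\alpha_{i-1}|$. The base case $i=1$ is direct: in the level-$1$ decomposition of $\alpha_j$, a length-$2$ factor either coincides with a single block or sits at the middle of a pair of adjacent blocks, and inspecting the four types of pair shows that the middle factors of $\alpha_1^\ast\alpha_1^\ast$, $\alpha_1^\ast\alphabar_1^\ast$, $\alphabar_1^\ast\alpha_1^\ast$ and $\alphabar_1^\ast\alphabar_1^\ast$ are $10$, $11$, $00$ and $01$ respectively. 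So $\alpha_1 = 01$ arises only as an $\alpha_1^\ast$-block or at the middle of an $\alphabar_1^\ast\alphabar_1^\ast$ pair, and symmetrically for $\alphabar_1 = 10$.

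For the inductive step, fix $i \geq 2$, assume the proposition for $i-1$, and let $w$ be a factor of $\alpha_j$ equal to $\alpha_i$, starting at position $p$. Since $\alpha_i$ begins with $\alpha_{i-1}$, the length-$L$ prefix of $w$ is an $\alpha_{i-1}$-factor of $\alpha_j$ located at $p$, and the inductive hypothesis leaves two possibilities for it: either (a) $p$ is level-$(i-1)$-block-aligned in $\alpha_j$ and the block at $p$ equals $\alpha_{i-1}$, or (b) $p$ sits at the middle of a level-$(i-1)$ pair $\alphabar_{i-1}^\ast \alphabar_{i-1}^\ast$.

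In case (a), $w$ is a concatenation of $2s_{i-1}$ consecutive level-$(i-1)$ blocks of $\alpha_j$ which must spell the pattern $\alpha_{i-1}^{s_{i-1}} \alphabar_{i-1}^{s_{i-1}}$. Since each level-$i$ block of $\alpha_j$ expands under the level-$(i-1)$ decomposition to $\alpha_{i-1}^{s_{i-1}} \alphabar_{i-1}^{s_{i-1}}$ or $\alphabar_{i-1}^{s_{i-1}} \alpha_{i-1}^{s_{i-1}}$ (according to whether its type is $\alpha_i^\ast$ or $\alphabar_i^\ast$), a routine bookkeeping check on the sub-block index $k \in \{1, \ldots, 2s_{i-1}\}$ at which $w$ begins within its current level-$i$ block shows that the required pattern can be read off in exactly two ways: $k = 1$ with the current level-$i$ block of type $\alpha_i^\ast$, giving conclusion (i); or $k = s_{i-1} + 1$ with both the current and the next level-$i$ block of type $\alphabar_i^\ast$, giving conclusion (ii) as the middle of a pair $\alphabar_i^\ast \alphabar_i^\ast$.

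In case (b), I iterate the inductive hypothesis on the successive $\alpha_{i-1}$-blocks of $w$, all of which occur in $\alpha_j$ at the common level-$(i-1)$ offset $L/2$. The first $s_{i-1}$ of these blocks, each equal to $\alpha_{i-1}$, force $s_{i-1} + 1$ consecutive level-$(i-1)$ blocks of $\alpha_j$ to be $\alphabar_{i-1}$. But the $(s_{i-1}+1)$-st block of $w$ equals $\alphabar_{i-1}$ and still sits at offset $L/2$, so the $\alphabar_{i-1}$-analogue of the inductive hypothesis forces the two level-$(i-1)$ blocks that it straddles both to be $\alpha_{i-1}$, contradicting what was just established. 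Case (b) is therefore impossible, completing the induction for $\alpha_i$; the analogue for $\alphabar_i$ follows by complementing every word throughout. The main technical obstacle is the bookkeeping in case (a) for $k \geq 2$, where one must verify, for each of the four possible type-pairs of consecutive level-$i$ blocks, that the composition $\alpha_{i-1}^{s_{i-1}} \alphabar_{i-1}^{s_{i-1}}$ is realisable only in the claimed ``$k = s_{i-1} + 1$, both level-$i$ blocks of type $\alphabar_i^\ast$'' configuration.
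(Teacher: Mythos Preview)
Your proof is correct and follows essentially the same approach as the paper's: induction on $i$, using the level-$(i-1)$ block decomposition of $\alpha_j$ to locate where the pattern $\alpha_{i-1}^{s_{i-1}}\alphabar_{i-1}^{s_{i-1}}$ can occur. The organisation differs only cosmetically---you split on whether the first level-$(i-1)$ sub-block of $w$ is aligned or not, whereas the paper first derives an intermediate statement about occurrences of $\alpha_i^{s_i}$ and $\alphabar_i^{s_i}$ and then combines the two halves---but the underlying case analysis and the contradiction in your case~(b) are exactly the mechanisms the paper uses.
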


\begin{proof}
Since we are dealing with a single sequence $(s_k)$ in this proof, we drop the $(s_k)$ superscript throughout. Hence, the superscripts on words here are all powers.

An occurrence of~$\alpha_i^\ast$ in~$\alpha_j^\ast$ of course corresponds to an~$\alpha_i$ factor of~$\alpha_j$. Similarly, we have
\[
	\alphabar_i\alphabar_i
	=
	\left(\alphabar_{i-1}^{s_{i-1}}\alpha_{i-1}^{s_{i-1}}\right)\left(\alphabar_{i-1}^{s_{i-1}}\alpha_{i-1}^{s_{i-1}}\right)
	=
	\alphabar_{i-1}^{s_{i-1}}\left(\alpha_{i-1}^{s_{i-1}}\alphabar_{i-1}^{s_{i-1}}\right)\alpha_{i-1}^{s_{i-1}}
	=
	\alphabar_{i-1}^{s_{i-1}}\alpha_{i}\alpha_{i-1}^{s_{i-1}},
\]
which demonstrates that the middle terms of~$\alphabar_i\alphabar_i$ also constitute an~$\alpha_i$ factor.

To show that these are the only~$\alpha_i$ factors in~$\alpha_j$, we proceed by induction on $i\ge 1$. In the case $i=1$, we have~$\alpha_i=01$ and~$\alphabar_i=10$. For any $j\ge 2$, consider an occurrence of $01$ in~$\alpha_j$. If this occurrence starts on an odd-indexed letter of~$\alpha_j$, then in~$\alpha_j^\ast$ this occurrence corresponds either to the letter~$\alpha_1^\ast$ or~$\alphabar_1^\ast$, but it clearly cannot be the second of these. If, on the other hand, this occurrence starts on an even-indexed letter of~$\alpha_j$, then this occurrence straddles two letters of~$\alpha_j^\ast$. By inspection, the only possibility is~$\alphabar_i^\ast\alphabar_i^\ast$. A similar argument applies for the occurrences of~$\alphabar_i$ in~$\alpha_j$, and this completes the base case.

Suppose that the proposition is true for some $i\geq 1$. Take $j\ge i+2$, and consider an occurrence of~$\alpha_{i+1}$ in~$\alpha_j$. Since~$\alpha_{i+1} = \alpha_{i}^{s_{i}}\alphabar_{i}^{s_{i}}$, we begin by considering the occurrences of~$\alpha_{i}^{s_{i}}$ and~$\alphabar_{i}^{s_{i}}$ in~$\alpha_j$. 

By induction, the only occurrences of~$\alpha_i$ in~$\alpha_j$ are as given in the proposition. Consequently, the only occurrences of~$\alpha_i^{s_i}$ in~$\alpha_j$ correspond to factors $(\alpha_i^\ast)^{s_i}$ in~$\alpha_j^\ast$, or the middle terms of the binary word corresponding to $(\alphabar_i^\ast)^{s_i+1}$. A similar statement holds for the occurrences of~$\alphabar_i^{s_i}$ in~$\alpha_j$.

We now consider the possible positions for our occurrence of~$\alpha_{i+1} = \alpha_{i}^{s_{i}}\alphabar_{i}^{s_{i}}$ in~$\alpha_j$. If the first half (the word~$\alpha_{i}^{s_{i}}$) appears in the middle of some factor of the form~$\alphabar_i^{s_i+1}$, then the second half,~$\alphabar_i^{s_i}$, cannot embed. Similarly, if the second half of the word embeds in the middle of a factor of the form~$\alpha_i^{s_i+1}$ then the first half cannot embed. Therefore, the only embeddings of~$\alpha_{i+1}$ in~$\alpha_j$ must correspond precisely to instances of the factor $(\alpha_i^\ast)^{s_i}(\alphabar_i^\ast)^{s_i}$ in~$\alpha_j^\ast$. A similar statement applies to~$\alphabar_{i+1}$. 

Finally, by construction,~$\alpha_j^\ast$ is composed of factors of the form $(\alpha_i^\ast)^{s_i}(\alphabar_i^\ast)^{s_i}$ and $(\alphabar_i^\ast)^{s_i}(\alpha_i^\ast)^{s_i}$. These correspond to the words~$\alpha_{i+1}$ and~$\alphabar_{i+1}$ that can be used to make up~$\alpha_j$, from which the inductive step follows. 
\end{proof}

We now state the main result of this section; we prove it after considering an example.

\begin{proposition}
\label{prop-words-distinct-enum}
Suppose that $(s_k)$ and $(t_k)$ are distinct sequences of positive integers, and that~$(s_k)$ lexicographically precedes $(t_k)$. Then there exists an integer $M\ge 4$ such that~$\L^{\exparen{s_k}}_n = \L^{\exparen{t_k}}_n$ for all~${n<M}$, but~$\L^{\exparen{s_k}}_M\supsetneq \L^{\exparen{t_k}}_M$.
\end{proposition}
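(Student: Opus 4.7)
The plan is to identify an explicit distinguishing word of carefully-chosen length. Let $m$ denote the smallest index at which the two sequences differ, so that $s_m<t_m$ and $\alpha_m^{\exparen{s_k}}=\alpha_m^{\exparen{t_k}}$, a common word we denote $\alpha_m$. The structural fact driving the proof is that in the level-$m$ word $\alpha_j^\ast$ associated with $\alpha_j^{\exparen{s_k}}$ (the word over the alphabet $\{\alpha_m^\ast,\alphabar_m^\ast\}$), maximal runs of either letter have length between $s_m$ and $2s_m$, while the corresponding $\alpha_j^\ast$ for $\alpha_j^{\exparen{t_k}}$ has all runs of length at least $t_m>s_m$.

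Let $c$ and $c'$ denote the last and first letters of $\alphabar_m$ respectively, and set $M=s_m|\alpha_m|+2\geq 4$. The candidate distinguishing word is
\[
	w^\ast \;=\; c\cdot\alpha_m^{s_m}\cdot c',
\]
of length $M$. To establish $w^\ast\in\L^{\exparen{s_k}}$, note that $w^\ast$ is a sub-factor of $\alphabar_m\,\alpha_m^{s_m}\,\alphabar_m$, whose level-$m$ block pattern $\alphabar_m^\ast(\alpha_m^\ast)^{s_m}\alphabar_m^\ast$ is realized in some $\alpha_j^{\exparen{s_k}}$ (it arises near an appropriate transition between $\alpha_{m+1}^{\exparen{s_k}}$ and $\alphabar_{m+1}^{\exparen{s_k}}$ in $\alpha_j^\ast$, using that the level-$m$ structure of $\L^{\exparen{s_k}}$ admits $\alpha_m^\ast$-runs of length exactly $s_m$). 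To establish $w^\ast\notin\L^{\exparen{t_k}}$, suppose toward a contradiction that $w^\ast$ is a factor of some $\alpha_j^{\exparen{t_k}}$. By Proposition~\ref{prop-alpha-i-embeddings}, each of the $s_m$ copies of $\alpha_m$ within $w^\ast$ embeds either block-aligned or shifted by $|\alpha_m|/2$, and since consecutive copies lie exactly $|\alpha_m|$ apart, all share the same type. The aligned case forces an $\alpha_m^\ast$-run of length $s_m$ flanked by $\alphabar_m^\ast$'s in $\alpha_j^\ast$, contradicting the lower bound $t_m$. The shifted case yields a conflict at the flanking letter $c$, as a direct computation using Observation~\ref{obs-last-letters} shows.

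To finish, it remains to verify that $\L^{\exparen{s_k}}_n=\L^{\exparen{t_k}}_n$ for all $n<M$ and that $\L^{\exparen{t_k}}_M\subseteq\L^{\exparen{s_k}}_M$; together with the previous paragraph these yield the desired $\L^{\exparen{s_k}}_M\supsetneq\L^{\exparen{t_k}}_M$. The key observation is that any factor of length at most $M$ occupies at most $s_m+2$ blocks at level $m$, and such a span covers at most two maximal runs in $\alpha_j^\ast$ for $\alpha_j^{\exparen{t_k}}$ (three or more runs would require at least $t_m+2>s_m+2$ blocks). Each resulting block pattern---either monochromatic or of the form $(\alpha_m^\ast)^a(\alphabar_m^\ast)^b$---is realizable in the level-$m$ structure of some $\alpha_{j'}^{\exparen{s_k}}$, whose run-length constraints are strictly weaker, so the corresponding binary factor lies in $\L^{\exparen{s_k}}$.

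The main obstacle is the case analysis in this concluding step, namely verifying that every block pattern realizable in the level-$m$ structure of $\L^{\exparen{t_k}}$ with at most $s_m+2$ blocks is likewise realizable in that of $\L^{\exparen{s_k}}$. Care is required for small $s_m$, especially $s_m=1$, where monochromatic patterns of length $s_m+2=3$ exceed the maximum run length $2s_m=2$ available in the level-$m$ structure of $\L^{\exparen{s_k}}$; in such cases the corresponding binary factor must instead be obtained from a two-run block pattern at a suitable offset.
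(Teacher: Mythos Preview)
Your approach is essentially the paper's: you pick the same $M=s_m|\alpha_m|+2$, and your distinguishing word $w^\ast=c\,\alpha_m^{s_m}\,c'$ is exactly the paper's $1\alpha^{s_I}1$ (for $I$ even) written in parity-independent form. The paper organises the final step by grouping into mega-blocks of size $s_I|\alpha|$ and then checking an explicit table of eight words of the form $a\,\alpha^{s_I}\,b$ and $a\,\alphabar^{s_I}\,b$; your run-length reformulation at level $m$ is a legitimate repackaging of the same content.

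That said, there is a genuine gap in your concluding step. Your block-pattern argument, as written, only takes a factor of some $\alpha_j^{\exparen{t_k}}$ and realises it inside $\L^{\exparen{s_k}}$, so it establishes $\L^{\exparen{t_k}}_n\subseteq\L^{\exparen{s_k}}_n$ for $n\le M$. You never argue the reverse inclusion $\L^{\exparen{s_k}}_n\subseteq\L^{\exparen{t_k}}_n$ for $n<M$, which is needed for the equality claim. The fix is symmetric and easy---for $n<M$ a factor occupies at most $s_m+1$ level-$m$ blocks, hence at most two runs in the $s$-world as well, and any such block pattern $(\alpha_m^\ast)^a(\alphabar_m^\ast)^b$ with $a+b\le s_m+1$ embeds in $\alpha_m^{t_m}\alphabar_m^{t_m}$ since $a,b\le s_m<t_m$---but you must say it. Two smaller points: your treatment of the $s_m=1$ monochromatic case should simply observe that a binary factor of length at most $|\alpha_m|+2\le 2|\alpha_m|$ lying in $\alpha_m^3$ already lies in $\alpha_m^2$ by periodicity (rather than invoking an unspecified ``two-run block pattern at a suitable offset''); and the deferred ``shifted case'' computation should be spelled out, since it is precisely the comparison of the last letter of $\alphabar_{m-1}$ with your $c$ via Observation~\ref{obs-last-letters} that makes the argument work.
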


We note that the ``inequalities'' in Proposition~\ref{prop-words-distinct-enum} are in some sense reversed---if $(s_k)$ precedes $(t_k)$ in lexicographical order, then it follows that~$|\L^{\exparen{s_k}}_M|$ is \emph{greater} than~$|\L^{\exparen{t_k}}_M|$. We illustrate this with the concrete example where the sequences are $(1,1,1,\dots)$ and $(2,1,1,\dots)$. In this case the construction gives us the table below.
\[
	\begin{array}{rll}\\
	i&\alpha^{(1,1,1,\dots)}_i&\alpha^{(2,1,1,\dots)}_i\\[2pt]
	\hline\\[-10pt]
	1&01					&01	\\[2pt]
	2&01\,10				&01\,01\,10\,10\\[2pt]
	3&0110\,1001			&01011010\,10100101\\[2pt]
	4&01101001\,10010110	&0101101010100101\,1010010101011010
	\end{array}
\]
One can check that the languages~$\L^{(1,1,1,\dots)}$ and~$\L^{(2,1,1,\dots)}$ agree up to and including length $3$:
\[
\begin{array}{lclcl}
	\L^{(2,1,1,\dots)}_0&=&\L^{(1,1,1,\dots)}_0	&=&	\{\varepsilon\},\\[2pt]
	\L^{(2,1,1,\dots)}_1&=&\L^{(1,1,1,\dots)}_1	&=&	\{0, 1\},\\[2pt]
	\L^{(2,1,1,\dots)}_2&=&\L^{(1,1,1,\dots)}_2	&=&	\{00, 01, 10, 11\},\\[2pt]
	\L^{(2,1,1,\dots)}_3&=&\L^{(1,1,1,\dots)}_3	&=&	\{001, 010, 011, 100, 101, 110\};
\end{array}
\]
but they differ at length $4$:
\[
	\begin{array}{lcl}
		\L^{(2,1,1,\dots)}_4		&=&	\{0010, 0100, 0101, 0110, 1001, 1010, 1011, 1101\},\\[2pt]
		\L^{(1,1,1,\dots)}_4		&=&	\{0010, 0100, 0101, 0110, 1001, 1010, 1011, 1101, 0011, 1100\}.
	\end{array}
\]
Hence, in this case Proposition~\ref{prop-words-distinct-enum} holds with $M=4$. We conclude this section by proving Proposition~\ref{prop-words-distinct-enum}, and then we begin translating Pouzet's construction to the permutation context in the next section.

\begin{proof}[Proof of Proposition~\ref{prop-words-distinct-enum}]
Let $(s_k)$ and $(t_k)$ be as in the statement of the result, and choose $I$ so that~${s_i=t_i}$ for all indices $i<I$, but $s_I<t_I$. We claim that we may take ${M=2^{I}s_1s_2\cdots s_I+2}$. Note that~$M\ge 4$.
For convenience, set~$\alpha=\alpha^{\exparen{s_k}}_{I}=\alpha^{\exparen{t_k}}_{I}$, and note that $|\alpha|=2^{I}s_1\cdots s_{I-1}$ and~${M=s_I|\alpha|+2}$.

First consider a word~$w$ of length $i<M$ in~$\L^{\exparen{s_k}}$, and let $j>I$ be such that~$\alpha_j^{\exparen{s_k}}$ contains~$w$. Now view~$\alpha_j^{\exparen{s_k}}$ as a sequence of concatenations of~$\alpha^{s_I}$ and~$\alphabar^{s_I}$. Since~$w$ has length $i\le s_I|\alpha|+1$, it follows that every~$w$ embedding in~$\alpha_j^{\exparen{s_k}}$ is contained in at most two terms of this concatenation. That is,~$w$ appears as a factor in one of the following words:
\[
	\alpha^{s_I}\alpha^{s_I},\quad
	\alpha^{s_I}\alphabar^{s_I},\quad
	\alphabar^{s_I}\alpha^{s_I},\quad
	\alphabar^{s_I}\alphabar^{s_I}
\]
In fact, containment in one of these four words is a precise characterization of the words of length at most $M-1$ in~$\L^{\exparen{s_k}}$, since all four words appear in~$\alpha_{I+3}^{\exparen{s_k}}$:
\[
	\alpha_{I+3}^{\exparen{s_k}} = \big(
		\left(\alpha^{s_I}\alphabar^{s_I}\right)^{s_{I+1}}
		\left(\alphabar^{s_I}\alpha^{s_I}\right)^{s_{I+1}}\big)^{s_{I+2}}
	\big(
		\left(\alphabar^{s_I}\alpha^{s_I}\right)^{s_{I+1}}
		\left(\alpha^{s_I}\alphabar^{s_I}\right)^{s_{I+1}}\big)^{s_{I+2}}
\]
Furthermore, since $t_I > s_I$, all four of these words also appear in~$\L^{\exparen{t_k}}$, and this establishes that~$\L^{\exparen{s_k}}$ and~$\L^{\exparen{t_k}}$ contain the same words up to length $M-1$.

We now turn our attention to the words of length $M$. First, any word~$w$ of length $M$ that embeds in one of the four words
\[
	\alpha^{s_I}\alpha^{s_I},\quad
	\alpha^{s_I}\alphabar^{s_I},\quad
	\alphabar^{s_I}\alpha^{s_I},\quad
	\alphabar^{s_I}\alphabar^{s_I}
\]
will lie in both~$\L^{\exparen{s_k}}$ and~$\L^{\exparen{t_k}}$, by the same argument we have just given. 
Any other word of length $M$ must be formed from a copy of~$\alpha^{s_I}$ or~$\alphabar^{s_I}$, with exactly one letter before and one after. That is, the only remaining words to consider are the following:
\begin{gather*} 
0\alpha^{s_I}0,\quad
1\alpha^{s_I}0,\quad
0\alpha^{s_I}1,\quad
1\alpha^{s_I}1,\\
0\alphabar^{s_I}0,\quad
1\alphabar^{s_I}0,\quad
0\alphabar^{s_I}1,\quad
1\alphabar^{s_I}1.	
\end{gather*}
Let us assume that $I$ is even; the case where $I$ is odd is analogous. By Observation~\ref{obs-last-letters}, the word~$\alpha$ both begins and ends with 0. The following table summarizes whether each of the eight words above belongs to~$\L^{\exparen{s_k}}$ and~$\L^{\exparen{t_k}}$, and if so illustrates how it arises (note that the words specified in the second and third columns are factors of~$\alpha_{I+3}^{\exparen{s_k}}$ and~$\alpha_{I+3}^{\exparen{t_k}}$, respectively.

\begin{center}
\begin{tabular}{c@{\qquad}l@{\qquad}l}
word 				&in~$\L^{\exparen{s_k}}$, factor of	&in~$\L^{\exparen{t_k}}$, factor of\\
\hline\\[-10pt]
$0\alpha^{s_I}0$	&$\alphabar^{s_I}\alphabar^{s_I}$ (see below)				&$\alpha^{t_I}\alpha^{t_I}$\\
$1\alpha^{s_I}0$	&$\alphabar^{s_I}\alpha^{s_I}\alpha^{s_I}$	&$\alphabar^{t_I}\alpha^{t_I}$\\
$0\alpha^{s_I}1$	&$\alpha^{s_I}\alpha^{s_I}\alphabar^{s_I}$	&$\alpha^{t_I}\alphabar^{t_I}$\\
$1\alpha^{s_I}1$	&$\alphabar^{s_I}\alpha^{s_I}\alphabar^{s_I}$&not in set (see below)\\
$0\alphabar^{s_I}0$	&$\alpha^{s_I}\alphabar^{s_I}\alpha^{s_I}$	&not in set (see below)\\
$1\alphabar^{s_I}0$	&$\alphabar^{s_I}\alphabar^{s_I}\alpha^{s_I}$	&$\alphabar^{t_I}\alpha^{t_I}$\\
$0\alphabar^{s_I}1$	&$\alpha^{s_I}\alphabar^{s_I}\alphabar^{s_I}$	&$\alpha^{t_I}\alphabar^{t_I}$\\
$1\alphabar^{s_I}1$	&$\alpha^{s_I}\alpha^{s_I}$ (see below)		&$\alphabar^{t_I}\alphabar^{t_I}$
\end{tabular}
\end{center}
There remain four entries in the above table to consider. Let us begin first with the word $0\alpha^{s_I}0$. Proposition~\ref{prop-alpha-i-embeddings} tells us that~$\alpha$ embeds in the middle of~$\alphabar\alphabar$, and thus~$\alpha^{s_I}$ embeds in the middle of~$\alphabar^{s_I}\alphabar^{s_I}$. Furthermore, the letter immediately to the left of this embedding is the last letter of~$\alphabar_{I-1}^{\exparen{s_k}}$, which is 0 by Observation~\ref{obs-last-letters} (since we are assuming that $I-1$ is odd). Similarly, the first letter after this embedding is the first letter of~$\alphabar_{I-1}^{\exparen{s_k}}$, which is also 0, and hence $0\alpha^{s_I}0$ is a factor of~$\alphabar^{s_I}\alphabar^{s_I}$.

A similar argument can be applied to show that $1\alphabar^{s_I}1\in\L^{\exparen{s_k}}$, and this establishes the containment~${\L_M^{\exparen{s_k}}\supseteq \L_M^{\exparen{t_k}}}$.

Our final task is to show that neither $1\alpha^{s_I}1$ nor $0\alphabar^{s_I}0$ lies in~$\L^{\exparen{t_k}}$. We consider only $1\alpha^{s_I}1$, the case for the other word being entirely analogous. 

By Proposition~\ref{prop-alpha-i-embeddings}, the only factors of~$\alpha_j^{\exparen{t_k}}$ that are equal to~$\alpha$ are either given by the \emph{letter}~$\alpha$, or appear in the middle of the pair of letters~$\alphabar\alphabar$, when we express~$\alpha_j^{\exparen{t_k}}$ as a word over $\{\alpha,\alphabar\}$. Thus, the binary word~$\alpha^{s_I}$ appears as a factor of~$\alpha_j^{\exparen{t_k}}$ only as the sequence of letters~$\alpha^{s_I}$, or in the middle of~$\alphabar^{s_I+1}$.
In this latter embedding, the letter in~$\alphabar^{s_I+1}$ that lies immediately to the left of such an embedding is the last letter of~$\alphabar_{I-1}^{\exparen{s_k}}$, which we have already established is equal to 0. Thus $1\alpha^{s_I}1$ does not appear in the middle of~$\alphabar^{s_I+1}$.

The only remaining possibility is that $1\alpha^{s_I}1$ embeds into~$\alpha_j^{\exparen{t_k}}$ precisely as~$\alpha^{s_I}$, together with one extra letter on either side. Now~$\alpha_j^{\exparen{t_k}}$ can be written as a word comprising factors of the form~$\alpha^{t_I}$ and~$\alphabar^{t_I}$. Since $t_I > s_I$, we are forced to embed either the letter 1 on the left of $1\alpha^{s_I}1$ as the rightmost letter of~$\alpha$, or the letter 1 on the right of $1\alpha^{s_I}1$ as the leftmost letter of~$\alpha$. Since~$\alpha$ both begins and ends with 0, however, neither case is possible. Thus $1\alpha^{s_I}1$ does not embed into~$\alpha_j^{\exparen{t_k}}$, from which we conclude that $1\alpha^{s_I}1\not\in\L^{\exparen{t_k}}$, as required.
\end{proof}

\section{Rightward-Yearning Pin Sequences}
\label{sec-rightward-yearning}

Our tool for translating the results about Pouzet's languages~$\L^{\exparen{s_k}}$ to the permutation context are pin sequences, first introduced by Brignall, Huczynska, and Vatter~\cite{brignall:decomposing-sim:}, although the language-theoretical approach we take is closer to that used by Brignall, Ru\v{s}kuc, and Vatter~\cite{brignall:simple-permutat:decide:}. Pin sequences are best described with the pictorial description of the permutation pattern order already depicted in Figure~\ref{fig-pattern-containment}. The \emph{plot} of the permutation~$\pi$ is the set $\{(i,\pi(i))\}$ of points. Clearly every plot of a permutation is \emph{generic} in the sense that no two of its points share the same $x$- or $y$-coordinate. Conversely, every finite generic set of points in the plane is order isomorphic to the plot of a unique permutation, in the sense that two sets of points in the plane are \emph{order isomorphic} if the axes can be stretched and shrunk to transform one of the sets into the other.

An \emph{axis-parallel rectangle} is any rectangle in the plane with sides parallel to the~$x$- and~$y$-axes. The \emph{rectangular hull} of a set of points in the plane is defined as the smallest axis-parallel rectangle containing them. Given a sequence $(p_1, \dots, p_i)$ of points in the plane, a \emph{proper pin} for this sequence is a point $p$ that lies outside their rectangular hull and \emph{separates}~$p_i$ from $\{p_1,\dots,p_{i-1}\}$, meaning that $p$ lies either horizontally or vertically between~$p_i$ and the rectangular hull of $\{p_1,\dots,p_{i-1}\}$. A \emph{proper pin sequence} is then constructed by starting with two points~$p_1$ and~$p_2$ (whose placement we discuss later), choosing~$p_3$ to be a proper pin for $(p_1,p_2)$, then choosing~$p_4$ to be a proper pin for $(p_1,p_2,p_3)$, and so on. We describe pins as either \emph{left}, \emph{right}, \emph{up}, or \emph{down} based on their position relative to the rectangular hull of $\{p_1,\dots,p_{i}\}$. Note that the direction of a pin uniquely specifies its position relative to the previous points in a pin sequence. We specify pin sequences with the alphabet $\{\mathsf{l},\r,\u,\d\}$.

It follows from their definition that proper pin sequences must turn by $90^\circ$ with each pin. In other words, an up pin may be immediately followed by a left or a right pin, but not by another up pin or by a down pin. As our goal is to encode binary strings as proper pin sequences, we \emph{could} translate each~$0$ into left or down and each~$1$ into right or up, with the choices determined by the previous pin. If we were to do this, we might for example use the correspondence
\[
	01101001
	\mapsto
	\mathsf{drululdr}.
\]

While we suspect that our results would remain true with this translation from binary words to proper pin sequences, they would undoubtedly be more troublesome to prove. Instead, we restrict our attention to \emph{rightward-yearning pin sequences}, defined as pin sequences that alternate between up or down pins at odd indices and right pins at even indices. Such restricted pin sequences are the subject of Jarvis' thesis~\cite{jarvis:pin-classes:}, and one of his results gives the basis and enumeration for a symmetry of the rightward-yearning pin sequences.

\begin{theorem}[Jarvis~{\cite[Theorem 7.24, after a symmetry]{jarvis:pin-classes:}}]
\label{thm-rightward-basis-enum}
The downward closure of the set of all rightward-yearning pin sequences is the class
\[
	\Av(1432, 4123, 13452, 14523, 24513, 42153, 52143, 53214).
\]
The generating function of this class is
\[
	\frac{1-x-4x^2-4x^3-8x^4-4x^5}{1-2x-4x^2-2x^3-8x^4-4x^5}.
\]
\end{theorem}

There is a natural correspondence between binary words and rightward-yearning pin sequences in which~$0$ maps to a down pin followed by a right pin, and~$1$~maps to an up pin followed by a right pin, so
\[
	01101001
	\mapsto
	\mathsf{drururdrurdrdrur}.
\]
For technical reasons, we refine this encoding by subscripting each right pin with the type of pin immediately preceding it, thereby slightly modifying our alphabet to $\alphabet$. The example above then becomes
\[
	01101001
	\mapsto
	\mathsf{d r_d u r_u u r_u d r_d u r_u d r_d d r_d u r_u}.
\]
(This pin sequence is plotted in Figure~\ref{fig-rightward-yearning}.) We stress that the subscripts on our encodings of right pins do not affect the actual pin sequences; these subscripts instead inform us where the corresponding points lie in the plane.

Formally speaking, given a binary word~$w\in\{0,1\}^\ast$, we denote by $\rho(w)$ the word (of twice the length as~$w$) over the alphabet $\alphabet$ that is obtained by performing the substitutions
\[
	0\leftarrow \d\rd
	\quad
	\text{and}
	\quad
	1\leftarrow \u\ru;
\]
that is, replacing occurrences of~$0$ by $\d\rd$ and occurrences of~$1$ by $\u\ru$.
We are frequently interested only in the image of $\rho$ and factors of those words, and so we define the language
\[
	\P			=
		\text{the factor closure of $\rho(\{0,1\}^\ast)$}.
\]

\begin{figure}
\begin{center}
\begin{tikzpicture}[scale=0.3]
\node[openpoint] (orig) at (0,-0.5) {};
\foreach \y [count=\x] in {-2,1,-1,3,0,-4,2,5,-3,-6,4,-8,-5,7,-7,6}
	\node[point] (\y) at (\x,\y) {};
\foreach \node [count=\n,remember=\node as \prevnode (initially orig)] in {-2,-1,1,0,3,2,-4,-3,5,4,-6,-5,-8,-7,7,6} {
	\ifodd\n
		\draw (\prevnode-|\node) -- ++(0,0.5)-- ++(0,-1) -- (\node);
	\else
		\draw (\prevnode|-\node) -- ++(-0.5,0) -- (\node);
	\fi
	}
\end{tikzpicture}
\end{center}
\caption{The rightward-yearning pin sequence associated to the word $\mathsf{d r_d u r_u u r_u d r_d u r_u d r_d d r_d u r_u}$.}
\label{fig-rightward-yearning}
\end{figure}

It remains to explain how to start a pin sequence. In the present work, we start every pin sequence with a point called an \emph{origin} and labelled by~$p_0$. \emph{Importantly, we do not consider the origin~$p_0$ to be part of the resulting rightward-yearning pin sequence.} 

For the first real pin,~$p_1$, if it has encoding $\u$ or $\ru$, then~$p_1$ appears above and to the right of~$p_0$. Analogously, if~$p_1$ has encoding $\d$ or $\rd$, then it is placed below and to the right of~$p_0$. In either case, the second pin~$p_2$ slices the rectangular hull of $(p_0,p_1)$ in the direction indicated by its encoding.

As our pin sequences progress only to the right, this origin~$p_0$ lies to the left of all other pins of the pin sequence, and will lie below all pins whose encoding is $\mathsf{u}$ or $\mathsf{r_u}$, and above all pins whose encoding is $\mathsf{d}$ or $\mathsf{r_d}$. In this way, the origin partitions each of the entries of the pin sequence into two halves, and our pin sequences could be considered a simple type of \emph{grid pin sequence} (as first considered by Brignall~\cite{brignall:grid-classes-an:}), but we do not adopt this viewpoint.

Given any word~$w\in\P$ of length~$n$, we take the \emph{rightward-yearning pin sequence defined by~$w$} to be the permutation $\psi_w^\norigin$ that is order isomorphic to the set $\{p_1,p_2,\dots,p_n\}$ of points defined by the word~$w$. Thus $|\psi_w^\norigin|=|w|=n$.

As is demonstrated in Section~\ref{sec-decomp}, in order to discuss subpermutations of rightward-yearning pin sequences, it is necessary to also introduce the permutations $\psi_w^\origin$ that include the origin as an extra point. We take $\psi^\origin_w$ to be the permutation order isomorphic to the set $\{p_0,p_1,p_2,\dots,p_n\}$ of points defined by the word~$w$, so $|\psi_w^\origin|=|w|+1=n+1$.

\section{The Construction}\label{sec-construction}

We now have the necessary background to describe the family of permutation classes used to prove Theorem~\ref{thm-wqo-not-algebraic}. Our construction begins with the binary words of Section~\ref{sec-pouzet-factors}, for which we recall that
\begin{align*}
	\L^{\exparen{s_k}}	&=
		\text{the factor-closed wqo languages of Section~\ref{sec-pouzet-factors}}.\\
\intertext{We then consider the factors of the image under $\rho$ of those languages, and the class of permutations to which they correspond:}
	\P^{\exparen{s_k}}	&=
		\text{the factor closure of $\rho(\L^{\exparen{s_k}})$ and}\\
	\C^{\exparen{s_k}}	&=
		\text{the downward closure of $\{\psi_w^\norigin : w\in\P^{\exparen{s_k}}\}$}.
\end{align*}

It follows from these definitions that $\P$,~$\L^{\exparen{s_k}}$, and $\P^{\exparen{s_k}}$ are closed under taking factors, while~$\C^{\exparen{s_k}}$ is closed under taking subpermutations, and is thus a permutation class. What remains to do is to establish that the classes $\C^{\exparen{s_k}}$ are wqo, and that if the sequences $(s_k)$ and $(t_k)$ differ, then the resulting classes $\C^{\exparen{s_k}}$ and $\C^{\exparen{t_k}}$ have different enumerations. These two results will follow by lifting the analogous results about the languages~$\L^{\exparen{s_k}}$, Propositions~\ref{prop-Lsk-wqo} and \ref{prop-words-distinct-enum}, to this context. In order to do this, we must first establish a decomposition result for the members of $\C^{\exparen{s_k}}$ in the next section. Before that, we make a simple observation now that we have the terminology to express it.

\begin{proposition}
\label{prop-Psk-to-Csk}
Suppose that~$v$ is a factor of~$w$ for words $v,w\in\P$.
Then, the permutation~$\psi_v^\norigin$ is contained in the permutation~$\psi_w^\norigin$ and the permutation~$\psi_v^\origin$ is contained in the permutation~$\psi_w^\origin$.
\end{proposition}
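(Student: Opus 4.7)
The plan is to show that the pins $p_{a+1}, \ldots, p_{a+|v|}$ of the rightward-yearning pin sequence defined by $w = w_1 v w_2$ (with $a = |w_1|$) form, together with the origin $p_0$, a configuration of points order-isomorphic to $p_0^v, p_1^v, \ldots, p_{|v|}^v$ of the fresh pin sequence defined by $v$. A first simplification is that, in a rightward-yearning sequence, the pins corresponding to $w_2$ all lie strictly to the right of the earlier pins and so have no effect on their relative ordering; thus we may assume $w_2 = \varepsilon$. We then proceed by induction on $|v|$.

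For the base case $|v|=1$, a direct check of the first-pin and proper-pin rules shows that both $p_1^v$ and $p_{a+1}$ lie above-right of the origin when $v_1 \in \{\u, \ru\}$ and below-right when $v_1 \in \{\d, \rd\}$. The cases $v_1 \in \{\ru, \rd\}$ require the observation that the subscript of $v_1$ agrees with the direction of $w_a$; this is automatic for $w \in \P$, since the subscript/direction consistency is built into $\rho(\{0,1\}^*)$. For the inductive step, write $v = v' \ell$ so that $v'$ is itself a factor of $w$ starting at position $a+1$. The inductive hypothesis yields that $\{p_0, p_{a+1}, \ldots, p_{a+|v|-1}\}$ realises $\psi_{v'}^\origin$, and restricting to the non-origin positions gives $\psi_{v'}^\norigin$. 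Adding the pin $p_{a+|v|}$: when $\ell$ is an up (resp.\ down) pin, $p_{a+|v|}$ lies above (resp.\ below) the entire hull of the previous pins, hence above (resp.\ below) every earlier $v$-portion pin and $p_0$, while its $x$-coordinate is squeezed between that of the most recent right pin $p_{a+|v|-1}$ and the previous maximum $x$-value in the sequence; when $\ell$ is a right pin the analogous statement holds with $x$ and $y$ interchanged. In each case, the pin $p_{|v|}^v$ in the fresh sequence occupies exactly the analogous extremal position relative to its predecessors, which delivers the order-isomorphism.

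The main obstacle is the $x$-placement of up/down pins (and symmetrically the $y$-placement of right pins): one must verify that the bounding right pin in the separation interval plays the same role in the $w$-sequence and in the fresh $v$-sequence. Once the $v$-portion contains at least two right pins preceding the new pin, both sequences identify the same internal pin as this bound and the match is automatic. In the initial stages of the induction the bound in the $w$-sequence may instead come from a right pin of $w_1$, in which case one must check that the $x$-rank of the new pin within the $v$-portion sub-pattern nevertheless matches the $x$-rank in the fresh sequence (where the analogous bound is supplied by the origin $p_0^v$). Including $p_0$ in the sub-pattern is then straightforward: it is leftmost in both sequences, and its $y$-relationship to each $v$-portion pin is fixed by whether that pin was placed above or below the surrounding hull, a property depending only on the letters of $v$.
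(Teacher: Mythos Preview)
Your approach is correct and follows the same conceptual line as the paper: identify the pins of $\psi_w$ corresponding to the factor $v$ and argue that they (together with the origin) form a copy of $\psi_v$. The paper dispatches this in two sentences, simply asserting that these pins sit in the same relative position to one another, and to the origin, as the pins of the fresh sequence for $v$; you unfold the same observation into an induction on $|v|$.

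Two minor points. First, your reduction to $w_2=\varepsilon$ is fine, but the stated justification (``the pins corresponding to $w_2$ all lie strictly to the right of the earlier pins'') is not quite accurate: if $w_2$ begins with an up or down letter, that pin lands to the \emph{left} of the final right pin of the $v$-portion. The correct (and simpler) reason is that the construction is sequential, so later pins never alter the relative positions of earlier ones.

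Second, the ``main obstacle'' you flag largely dissolves once you note the following uniform fact: in any rightward-yearning pin sequence, a new up/down pin has $x$-coordinate strictly greater than \emph{every} earlier pin except its immediate predecessor (and strictly less than that predecessor); dually, a new $\ru$ (respectively $\rd$) pin has $y$-coordinate strictly greater (respectively less) than every earlier pin except its immediate predecessor. These ranks are absolute and do not depend on whether the bounding pin happens to come from $w_1$ or from the $v$-portion. Consequently the new pin acquires the same rank among $\{p_0,p_{a+1},\dots,p_{a+|v|-1}\}$ as $p_{|v|}^v$ does among $\{p_0^v,p_1^v,\dots,p_{|v|-1}^v\}$, with no separate case analysis for the initial stages required. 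This is, in effect, the content of the paper's one-line assertion.
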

\begin{proof}
Fix a factor of~$v$ in~$w$. The pins of $\psi_w^\norigin$ corresponding to this factor are in the same relative position to each other as the pins of~$\psi_v^\norigin$, which verifies the claim for this pair of permutations. For the version of the result with origins, we note that the origin in~$\psi_w^\origin$ is in the same position relative to the pins that correspond to a factor of~$v$ in~$w$ as the origin in~$\psi_v^\origin$ is in relative to the rest of the pins of that permutation.
\end{proof}

\section{A Decomposition}
\label{sec-decomp}

We frame the discussion in this section as considering the effect of deleting points from rightward-yearning pin sequences. There are essentially three types of pins we can delete: the first pin, the last pin, or an interior pin. We handle these cases below in order of their difficulty, for a word~$w\in\P$ of length $n$.
\begin{itemize}
\item Deleting the last pin from a sequence is equivalent to not creating it at all. Thus the permutation obtained from~$\psi^\norigin_w$ by deleting the point corresponding to~$p_n$ is~$\psi^\norigin_{w(1)\cdots w(n-1)}$.
\item The same argument as above holds for the first pin, so the permutation obtained from~$\psi^\norigin_w$ by deleting the point corresponding to the first pin is~$\psi^\norigin_{w(2)\cdots w(n)}$. 
\item Deleting the $i$th pin from~$\psi^\norigin_w$ for some index $2\le i\le n-2$ corresponds to replacing the origin in the permutation~$\psi^\origin_{w(i+1)\cdots w(n)}$ with the permutation~$\psi^\norigin_{w(1)\cdots w(i-1)}$.
\end{itemize}
We call the operation in this last case \emph{inflating the origin}, and this case is the reason we introduced the permutations~$\psi^\origin_w$ in Section~\ref{sec-rightward-yearning} in the first place. The process of removing pins from the permutations~$\psi^\origin_w$ is also needed, but is entirely analogous to the above, and will be handled once we have introduced some additional notation.

Inflating origins is similar to the sum of two permutations. Recall that given a permutation~$\sigma$ of length $m$ and another permutation~$\psi$ of length $n$, their \emph{sum} is the permutation~$\sigma\oplus\tau$ of length~$m+n$ defined by
\[
	(\sigma\oplus\tau)(i)
	=
	\left\{\begin{array}{ll}
	\sigma(i)&\text{if $1\le i\le m$,}\\
	\tau(i-m)+m&\text{if $m+1\le i\le m+n$.}
	\end{array}\right.
\]

The origin in~$\psi^\origin_w$ is always the leftmost point, so we define the more general operation of inflating the first entry of a permutation. Suppose that~$\sigma$ is a permutation of length $m$ and that $\tau$ is a permutation of length $n+1$. Then, we define~$\sigma\boxplus\tau$ as the permutation of length $m+n$ obtained by inflating the first entry of $\tau$ by~$\sigma$. Formulaically, $\sigma\boxplus\tau$ can be defined by
\[
	(\sigma\boxplus\tau)(i)
	=
	\left\{\begin{array}{ll}
	\sigma(i)+\tau(1)-1&\text{if $1\le i\le m$,}\\
	\tau(i-m+1)&\text{if $m< i\le m+n$ and $\tau(i-m+1)<\tau(1)$, and}\\
	\tau(i-m+1)+m-1&\text{if $m< i\le m+n$ and $\tau(i-m+1)>\tau(1)$.}
	\end{array}\right.
\]
See Figure~\ref{fig-interior-pin} for an example. There is some relationship between $\oplus$ and $\boxplus$ in that, if $\sigma$ has length at least two, then $\sigma\oplus\tau=\sigma\boxplus(1\oplus\tau)$.

\begin{figure}
{\centering
\begin{tikzpicture}[scale=0.25]
\node[openpoint] (orig) at (0,0) {};
\foreach \y [count=\x] in {-2,2,-1,-4,1,4,-3,-6,3,-5}
	\node[point] (\y) at (\x,\y) {};
\foreach \node [count=\n,remember=\node as \prevnode (initially orig)] in {-2,-1,2,1,-4,-3,4,3,-6,-5} {
	\ifodd\n
		\draw (\prevnode-|\node) -- ++(0,0.5)-- ++(0,-1) -- (\node);
	\else
		\draw (\prevnode|-\node) -- ++(-0.5,0) -- (\node);
	\fi
	}
\node[openpoint,minimum width=\plotptradius+3pt] at (-4) {};
\node[draw=none,fill=none] at (12,-1) {$\rightarrow$};
\begin{scope}[shift={(15,-0.5)}]
\draw[fill=black!10,draw=none] (-0.5,-2.5) rectangle (4.5,2.5);
\node[openpoint] (orig) at (0,0) {};
\coordinate (-40) at (4,-4);
\foreach \y [count=\x] in {-2,2,-1,1}
	\node[point] (\y) at (\x,\y) {};
\foreach \y [count=\x] in {4,-3,-5,3,-4}
	\node[point] (\y) at (\x+4,\y) {};
\foreach \node [count=\n,remember=\node as \prevnode (initially orig)] in {-2,-1,2,1,4,3} {
	\ifodd\n
		\draw (\prevnode-|\node) -- ++(0,0.5)-- ++(0,-1) -- (\node);
	\else
		\draw (\prevnode|-\node) -- ++(-0.5,0) -- (\node);
	\fi
	}
\foreach \node [count=\n,remember=\node as \prevnode (initially -40)] in {-3,4,3,-5,-4} {
	\ifodd\n
		\draw (\prevnode|-\node) -- ++(-0.5,0) -- (\node);
	\else
		\draw (\prevnode-|\node) -- ++(0,0.5)-- ++(0,-1) -- (\node);
	\fi
	}
\end{scope}
\end{tikzpicture}\par}
\caption{Deleting the circled interior pin from~$\psi^\norigin_{\d\rd\u\ru\d\rd\u\ru\d\rd}$ results in~$\psi^\norigin_{\d\rd\u\ru}\boxplus\psi^\origin_{\rd\u\ru\d\rd}$.}\label{fig-interior-pin}
\end{figure}

From our previous discussion, it follows that if we delete the $i$th pin from~$\psi^\norigin_w$, where~$w\in\P$ has length~$n$ and $2\le i\le n-1$, then we obtain the permutation
\[
	\psi^\norigin_{w(1)\cdots w(i-1)}\boxplus \psi^\origin_{w(i+1)\cdots w(n)}.
\]
Indeed, letting $\varepsilon$ denote the empty word or empty permutation (as dictated by the context), and with the understanding that~$\psi^\norigin_\varepsilon=\varepsilon$ while~$\psi^\origin_\varepsilon=1$, we see that for \emph{any} $1\le i\le n$, the result of deleting the point corresponding to the~$i$th pin~$p_i$ from~$\psi^\norigin_w$ is
\[
	\psi^\norigin_{w(1)\cdots w(i-1)}\boxplus\psi^\origin_{w(i+1)\cdots w(n)}.
\]

As noted earlier, we must also describe how to delete points from the permutations~$\psi^\origin_w$, which follows by the same analysis. If we delete the origin~$p_0$ from~$\psi^\origin_w$, we obviously obtain the permutation~$\psi^\norigin_w$.
Otherwise, if~$w\in\P$ has length $n$ and $1\le i\le n$, then the result of deleting the point corresponding to the $i$th pin~$p_i$ from~$\psi^\origin_w$ is
\[
	\psi^\origin_{w(1)\cdots w(i-1)}\boxplus\psi^\origin_{w(i+1)\cdots w(n)}.
\]

Our next result puts this decomposition in the form we need it later. A stronger result that puts conditions on how the factors $w_1,\dots,w_k$ must appear in $w$ would be possible, but is not necessary for our uses. Note that we needn't include parentheses in the statement of this result because the operation $\boxplus$ is associative.

\begin{proposition}\label{prop-boxplus-decomp}
For any nonempty subpermutation~$\pi$ of the rightward-yearning pin sequence~$\psi^\norigin_w$, there exist nonempty words~$w_1$, $\dots$,~$w_k\in\P$, each appearing as a factor in~$w$, such that
\[
	\pi=
	\psi^\norigin_{w_1}\boxplus\psi^\origin_{w_2}\boxplus\cdots\boxplus\psi^\origin_{w_k}.
\]
\end{proposition}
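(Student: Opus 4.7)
The plan is to proceed by induction on the number of points deleted from $\psi^\norigin_w$ to reach $\pi$. The base case, with no deletions, is immediate: take $k=1$ and $w_1=w$. For the inductive step, suppose $\pi'$ is a subpermutation of $\psi^\norigin_w$ that already admits a decomposition $\pi'=\psi^\norigin_{w_1}\boxplus\psi^\origin_{w_2}\boxplus\cdots\boxplus\psi^\origin_{w_k}$ with each $w_j$ a nonempty factor of $w$, and consider removing one further point to obtain $\pi$.

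The key fact I will need is that point deletion respects the $\boxplus$-decomposition: in $\sigma\boxplus\tau$, the inflated block contributed by $\sigma$ occupies a rectangle of positions and values disjoint from those of the remaining entries of $\tau$, so deleting a point inside the block replaces $\sigma$ by the corresponding subpermutation without disturbing $\tau$, and likewise for deletions outside the block. Applied iteratively to the decomposition of $\pi'$, this means that any single deletion alters exactly one factor, the one whose block contains the deleted point. That point corresponds to the $i$\th letter of some $w_j$, and the single-pin deletion rules stated just before the proposition then give an explicit replacement: for $j=1$, the factor $\psi^\norigin_{w_1}$ becomes $\psi^\norigin_{w_1(1)\cdots w_1(i-1)}\boxplus\psi^\origin_{w_1(i+1)\cdots w_1(|w_1|)}$, and for $j\ge 2$, the factor $\psi^\origin_{w_j}$ becomes $\psi^\origin_{w_j(1)\cdots w_j(i-1)}\boxplus\psi^\origin_{w_j(i+1)\cdots w_j(|w_j|)}$. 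Associativity of $\boxplus$ then lets us re-bracket the whole product into the required form, and each new word is a prefix or suffix of the old $w_j$, hence still a factor of $w$. The boundary cases $i=1$ and $i=|w_j|$, where one of the prefix/suffix pieces is the empty word, are handled by the conventions $\psi^\norigin_\varepsilon=\varepsilon$ and $\psi^\origin_\varepsilon=1$, together with the identities $\varepsilon\boxplus\psi^\origin_v=\psi^\norigin_v$, $1\boxplus\psi^\origin_v=\psi^\origin_v$, and $\sigma\boxplus 1=\sigma$; these let us absorb empty terms and keep only nonempty factors. In particular, if deletions eventually exhaust the entire leftmost word $w_1$, the next factor $\psi^\origin_{w_2}$ correctly becomes the new leftmost $\psi^\norigin_{w_2}$, preserving the required form.

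The main obstacle I anticipate is a careful verification of the block-structure claim that underpins the induction, namely that deletion commutes with the $\boxplus$-decomposition in the sense above. This is geometrically transparent from the inflation definition of $\boxplus$ in terms of plots, but a fully rigorous proof requires identifying the intervals of positions and values that each factor of $\pi'$ occupies and checking that each block is order-isomorphic to the claimed $\psi^\norigin$ or $\psi^\origin$ pin sequence.
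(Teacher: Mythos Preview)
Your proposal is correct and follows essentially the same approach as the paper's proof: both argue by induction on the number of points deleted, observing that each deletion affects exactly one factor of the current $\boxplus$-decomposition and invoking the single-pin deletion rules established just before the proposition. Your treatment is in fact more explicit than the paper's about the boundary cases (empty prefixes/suffixes and the absorption identities) and about the block-structure claim that deletion respects $\boxplus$; the paper simply asserts that the deleted entry ``must be a non-origin pin'' of one of the factors and that such a deletion yields another permutation of the desired form.
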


\begin{proof}
Fix a particular embedding of~$\pi$ in~$\psi_w^\norigin$, and delete the entries of~$\psi_w^\norigin$ that are not involved in this embedding one at a time. Before we delete any given point, we may assume by induction that we have a permutation of the form~$\psi^\norigin_{u_1}\boxplus\psi^\origin_{u_2}\boxplus\cdots\boxplus\psi^\origin_{u_\ell}$ for some choice of words $u_1$, $\dots$, $u_\ell\in \P$ that appear as factors in the word~$w$. 

Let $j$ denote the index of the word $u_j$ corresponding to the component from which our next entry is to be deleted.
This entry is either any pin of~$\psi^\norigin_{u_1}$ (if $j=1$) or a non-origin pin of~$\psi^\origin_{u_j}$ (if $j>1$). If $|u_j| >1$, then by the considerations before the statement of the proposition it follows that any such deletion results in another permutation of the desired form. If $|u_j|=1$ and $j>1$, then the component $\psi^\origin_{u_j}$ becomes $\psi^\origin_\varepsilon$, and since $\alpha\boxplus \psi^\origin_\varepsilon = \alpha$ we obtain the expression	
\[
\psi^\norigin_{u_1}\boxplus\cdots \boxplus \psi^\origin_{u_{j-1}}\boxplus\psi^\origin_{u_{j+1}}\boxplus\cdots\boxplus\psi^\origin_{u_\ell}.
\]	
Similarly, if $|u_j|=1$ and $j=1$, then from the fact that $\psi^\norigin_\varepsilon \boxplus \beta^\origin = \beta^\norigin$, we get 
$\psi^\norigin_{u_2}\boxplus\cdots\boxplus\psi^\origin_{u_\ell}$. This completes the inductive step and hence the proof of the proposition.
\end{proof}

We are now able to describe the decomposition of all permutations in some class $\C^{\exparen{s_k}}$ in terms of permutations defined by words from the language $\P^{\exparen{s_k}}$.

\begin{theorem}
\label{thm-Csk-description}
Let $(s_k)$ be a sequence of positive integers. The class $\C^{\exparen{s_k}}$ consists precisely of those permutations~$\pi$ that can be expressed as
\[
	\pi=\psi^\norigin_{v_1}\boxplus\psi^\origin_{v_2}\boxplus\cdots\boxplus\psi^\origin_{v_\ell}
\]
for words $v_1$, $v_2$, $\dots$, $v_\ell\in\P^{\exparen{s_k}}$ with $|v_1|+|v_2|+\cdots+|v_\ell|=|\pi|$.
\end{theorem}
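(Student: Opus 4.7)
The plan is to prove the two inclusions separately, with the reverse direction being the substantive one.

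For the forward inclusion (every $\pi\in\C^{\exparen{s_k}}$ has such a decomposition), I would unpack the definition of $\C^{\exparen{s_k}}$ to obtain $w\in\P^{\exparen{s_k}}$ with $\pi\leq \psi^\norigin_w$, then apply Proposition~\ref{prop-boxplus-decomp} directly. The resulting component words are factors of $w$ and hence belong to $\P^{\exparen{s_k}}$ by factor-closedness, while the length identity $|v_1|+\cdots+|v_\ell|=|\pi|$ follows from $|\sigma\boxplus\tau|=|\sigma|+|\tau|-1$ together with $|\psi^\norigin_{v_1}|=|v_1|$ and $|\psi^\origin_{v_j}|=|v_j|+1$ for $j\geq 2$.

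For the reverse inclusion, the plan is to construct a single word $w\in\P^{\exparen{s_k}}$ in which $v_1,\ldots,v_\ell$ appear as disjoint factors, in left-to-right order, separated by at least one letter each. Given such a $w$, the subpermutation of $\psi^\norigin_w$ obtained by keeping exactly the pins in these $v_i$-intervals will equal $\psi^\norigin_{v_1}\boxplus\psi^\origin_{v_2}\boxplus\cdots\boxplus\psi^\origin_{v_\ell}$: iteratively deleting the gap pins preserves the $\psi^\norigin\boxplus\psi^\origin\boxplus\cdots$ form (by the analysis of Section~\ref{sec-decomp}), and each maximal run of deletions between two surviving intervals contributes one extra $\boxplus\psi^\origin$ factor. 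To build such a $w$, I would exploit the self-similar structure of Pouzet's construction. Since each $v_i$ is a factor of $\rho(\alpha^{\exparen{s_k}}_{j_i})$ for some $j_i$, choosing $J$ with $J>\max_i j_i$ and $|\rho(\alpha^{\exparen{s_k}}_J)|>\max_i|v_i|$ makes every $v_i$ a strict factor of $\rho(\alpha^{\exparen{s_k}}_J)$. Because $\alpha^{\exparen{s_k}}_{J+t}$ is assembled from iterated concatenations of $\alpha^{\exparen{s_k}}_J$ and its complement, the number of disjoint factor-occurrences of $\alpha^{\exparen{s_k}}_J$ inside $\alpha^{\exparen{s_k}}_{J+t}$ grows without bound as $t\to\infty$. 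Taking $t$ so that there are at least $\ell$ such occurrences in left-to-right order, I would set $w=\rho(\alpha^{\exparen{s_k}}_{J+t})$ and place each $v_i$ strictly inside the $i$-th copy of $\rho(\alpha^{\exparen{s_k}}_J)$ in $w$, shifted so that at least one letter of the copy follows $v_i$ to deliver the required gap.

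The main obstacle I anticipate is the bookkeeping that matches the selected pins to the prescribed $\boxplus$-expression. This rests on the fact, worked out explicitly in Section~\ref{sec-decomp}, that deleting the first pin, the last pin, or an interior pin each carries a permutation of the form $\psi^\norigin_{u_1}\boxplus\psi^\origin_{u_2}\boxplus\cdots$ into another instance of the same form, with the component words updated in a controlled way. Removing all gap pins one at a time therefore leaves the surviving runs as the $v_i$, exactly as required, completing the proof of the reverse inclusion.
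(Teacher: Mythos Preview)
Your proposal is correct and follows essentially the same route as the paper: the forward inclusion is identical (unpack the definition, apply Proposition~\ref{prop-boxplus-decomp}, use factor-closedness of $\P^{\exparen{s_k}}$), and for the reverse inclusion both arguments manufacture a single word $w\in\P^{\exparen{s_k}}$ containing $v_1,\dots,v_\ell$ as separated factors, then realize $\pi$ as the subpermutation of $\psi^\norigin_w$ on the surviving pins. The only cosmetic difference is that the paper invokes Proposition~\ref{prop-contains-alpha} to guarantee many disjoint copies of $\rho(\alpha^{\exparen{s_k}}_m)$ inside a long enough word, whereas you appeal directly to the self-similar block structure of $\alpha^{\exparen{s_k}}_{J+t}$; your more explicit deletion bookkeeping (each gap contributing exactly one $\boxplus\psi^\origin$ factor) is a point the paper leaves implicit.
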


\begin{proof}
Let~$\pi\in\C^{\exparen{s_k}}_n$. By the definition of $\C^{\exparen{s_k}}$, there is some word~$w\in\P^{\exparen{s_k}}$ such that~$\pi$ is contained in the permutation~$\psi^\norigin_w$. By Proposition~\ref{prop-boxplus-decomp}, it follows that there exist words $v_1$, $v_2$, $\dots$, $v_\ell$, each appearing as a factor of~$w$, such that 
\[
	\pi=\psi^\norigin_{v_1}\boxplus\psi^\origin_{v_2}\boxplus\cdots\boxplus\psi^\origin_{v_\ell}.
\]
Since each $v_i$ is a factor of~$w\in\P^{\exparen{s_k}}$, it follows that each $v_i\in\P^{\exparen{s_k}}$. It is similarly clear that $|v_1|+|v_2|+\cdots+|v_\ell|=|\pi|$.

Conversely, we consider a permutation~$\pi$ as in the statement of the theorem and appeal to the properties of the language~$\L^{\exparen{s_k}}$. By definition, for each $v_i\in\P^{\exparen{s_k}}$, there exists an integer $j_i$ such that $v_i$ is a factor of $\rho(\alpha_{j_i}^{\exparen{s_k}})$. Set $m=\max\{j_1,\dots,j_\ell\}$, so that each of $v_1$, $\dots$, $v_\ell$ appears as a factor in $\rho(\alpha_m^{\exparen{s_k}})$.

Letting $f^{\exparen{s_k}}$ denote the function from Proposition~\ref{prop-contains-alpha}, we see from that result that every (binary) word of length at least $f^{\exparen{s_k}}(m)$ in~$\L^{\exparen{s_k}}$ contains $\alpha_m^{\exparen{s_k}}$. It follows that every word in $\P^{\exparen{s_k}}$ of length at least $2f^{\exparen{s_k}}(m)+2$ contains $\rho(\alpha_m^{\exparen{s_k}})$, and thus also contains all of $v_1$, $v_2$, $\dots$, $v_\ell$. (The ``$+2$'' here is due to the fact that the first and last letter of a word in $\P$ need not be part of a factor of the form $\rho(u)$.) It follows that every word in $\P^{\exparen{s_k}}$ of length at least $\ell\cdot(2f^{\exparen{s_k}}(m)+2)+(\ell-1)$ must contain a word~$w$ of the form
\[
	w = v_1x_1v_2x_2\cdots x_{\ell-1}v_\ell
\]
where $x_1$, $\dots$, $x_{\ell-1}$ are arbitrary non-empty words. Thus~$\pi$ is a subpermutation of~$\psi^\norigin_w$.
\end{proof}

It is tempting to conclude from Theorem~\ref{thm-Csk-description} that each $\C^{\exparen{s_k}}$ is $\boxplus$-closed, which would mean that ${\sigma,\tau\in\C^{\exparen{s_k}}}$ implies that~$\sigma\boxplus\tau\in\C^{\exparen{s_k}}$, but we need to be careful. If we simply inflate the first entry of $\tau$ by~$\sigma$, then what results is not guaranteed to lie in $\C^{\exparen{s_k}}$, since it is not necessarily the case that~$\tau$ can be described in such a way that its first entry can act as a (non-phantom) origin. However, since~${\tau\in\C^{\exparen{s_k}}}$, Theorem~\ref{thm-Csk-description} tells us that we can write
\[
	\tau=\psi^\norigin_{v_1}\boxplus\psi^\origin_{v_2}\boxplus\cdots\boxplus\psi^\origin_{v_\ell}.
\]
for some $v_1$, $v_2$, $\dots$, $v_\ell$ in $\P^{\exparen{s_k}}$, and then Theorem~\ref{thm-Csk-description} tells us that
\[
	\sigma\boxplus \psi^\origin_{v_1}\boxplus\psi^\origin_{v_2}\boxplus\cdots\boxplus\psi^\origin_{v_\ell}
		\in\C^{\exparen{s_k}}.
\]

\section{Indecomposable Permutations}\label{sec-indecomposable}

One calls a permutation \emph{sum decomposable} if it can be expressed as the sum of two shorter permutations, and \emph{sum indecomposable} otherwise. Analogously, we say that a permutation is \emph{\mbox{$\boxplus$-decomposable}} if it can be expressed as~$\sigma\boxplus\tau$ for two shorter permutations each of length at least two, and \emph{$\boxplus$-indecomposable} otherwise. (We must require that both~$\sigma$ and $\tau$ have length at least two to avoid trivial decompositions, because $1\boxplus\pi=\pi\boxplus 1=\pi$.)

\begin{table}[t]
\vspace{-12pt}
\vtop{\[
	\begin{array}{@{}c@{\quad}c@{}}
		\begin{array}[t]{rcll}
		\hline\\[-10pt]
		1		&=&\psi_\u^\norigin=\psi_\d^\norigin=\psi_\r^\norigin		&\text{$\boxplus$-indecomp.}\\[2pt]
		\hline\\[-10pt]
		12		&=&\psi_{\r\d}^\norigin=\psi_{\d\rd}^\norigin		&\text{$\boxplus$-indecomp.}\\
		21		&=&\psi_{\r\u}^\norigin=\psi_{\u\ru}^\norigin			&\text{$\boxplus$-indecomp.}\\[2pt]
		\hline\\[-10pt]
		123		&=&12\boxplus 12\\
		132		&=&\psi_{\r\d\rd}^\norigin=\psi_{\d\rd\u}^\norigin			&\text{$\boxplus$-indecomp.}\\
		213		&=&21\boxplus 12=\psi_{\d\rd\d}^\norigin\\
		231		&=&12\boxplus 21=\psi_{\u\ru\u}^\norigin\\
		312		&=&\psi_{\u\ru\d}^\norigin=\psi_{\r\u\ru}^\norigin			&\text{$\boxplus$-indecomp.}\\
		321		&=&21\boxplus 21\\[2pt]
		\hline\\[19.625pt]
		\end{array}
	&%
		\begin{array}[t]{rcll}
		\hline\\[-10pt]
		1234	&=&12\boxplus 12\boxplus 12\\
		1243	&=&12\boxplus 132\\
		1324	&=&132\boxplus 12\\
		1342	&=&\psi_{\r\d\rd\u}^\norigin	
				&\text{$\boxplus$-indecomp.}\\
		1423	&=&\psi_{\d\rd\u\ru}^\norigin
				&\text{$\boxplus$-indecomp.}\\
		1432	& &
				&\text{$\boxplus$-indecomp.}
			\\			
		2134	&=&21\boxplus 12\boxplus 12\\
		2143	&=&21\boxplus 132\\
		2314	&=&12\boxplus 21\boxplus 12\\
		2341	&=&12\boxplus 12\boxplus 21\\
		2413	&=&\psi_{\u\ru\u\ru}^\norigin=\psi_{\r\d\rd\d}^\norigin
				&\text{$\boxplus$-indecomp.}\\
		2431	&=&132\boxplus 21\\[2pt]
		\hline
		\end{array}
	\end{array}
\]}
\caption{Decompositions of permutations of lengths at most four (with only one of each symmetry class of length four represented). An initial letter $\r$ corresponds to either $\ru$ or $\rd$.}
\label{table-short-decomps}
\end{table}

Table~\ref{table-short-decomps} shows the decomposition of permutations of lengths at most four, as well as their expressions of the form of~$\psi_w^\norigin$, for those permutations that can be expressed that way. To cut the number of cases in half, when considering permutations of length four we utilize the fact that these concepts are invariant under \emph{complementation} of permutations (flipping their plots upside down).

From Table~\ref{table-short-decomps} we see that there are eight $\boxplus$-indecomposable permutations of length four ($1342$, $1423$, $1432$, $2413$, and their complements). In this table, an initial letter $\r$ corresponds to either $\ru$ or $\rd$, so of the permutations of length four, only $1423$ and its complement $4132$ have unique representations of the form~$\psi_w$, while the other six $\boxplus$-indecomposable permutations do not.

The permutation $1432$ and its complement $4123$ in fact cannot be expressed in terms of~$\boxplus$ and permutations of the form~$\psi^\norigin_w$ at all. This means that these two permutations are not subpermutations of any rightward-yearning pin sequence, and thus they do not arise in the classes we construct. Indeed, we have already seen this result, as $1432$ and $4123$ appear in the basis given in Theorem~\ref{thm-rightward-basis-enum}.

Our next result shows that all sufficiently long rightward-yearning pin sequences are $\boxplus$-indecomposable. The bound $|w|\ge 4$ is necessary because, as shown in Table~\ref{table-short-decomps},~$\psi_{\d\rd\d}^\norigin=213=21\boxplus 12$ and, symmetrically,~$\psi_{\u\ru\u}^\norigin=231=12\boxplus 21$.

\begin{lemma}\label{lem-pins-box-indecomposable}
Suppose that~$w\in\P$. If $|w|\ge 3$, then~$\psi_w^\origin$ is $\boxplus$-indecomposable. If $|w|\ge 4$, then~$\psi_w^\norigin$ is also $\boxplus$-indecomposable.
\end{lemma}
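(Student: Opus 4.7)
The plan is to argue by contradiction. If $\psi_w^\norigin$ is $\boxplus$-decomposable, then for some $k\in\{2,\dots,|w|-1\}$ the leftmost $k$ points of its plot occupy a consecutive set of rows --- equivalently, no point further to the right has a $y$-coordinate strictly between the minimum and maximum $y$ over those first $k$. I would produce such a point, contradicting the decomposition.

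First I would pin down the left-to-right order of the points. By the proper-pin rule, each up/down pin $p_{2j+1}$ is placed vertically between its predecessor $p_{2j}$ (a right pin) and the hull of $\{p_0,\dots,p_{2j-1}\}$, so $x(p_{2j+1})$ lies strictly to the left of $x(p_{2j})$ but strictly to the right of every earlier pin. Iterating, the $x$-order of the pins of $\psi_w^\norigin$ is the interleaving $p_1,p_3,p_2,p_5,p_4,p_7,p_6,\dots$; consequently the leftmost $k$ entries form the set $\{p_1,\dots,p_{2m+1}\}$ when $k=2m+1$ is odd, and $\{p_1,\dots,p_{2m-1},p_{2m+1}\}$ (skipping $p_{2m}$) when $k=2m$ is even.

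With this description in hand I would locate the required witness. For odd $k=2m+1$ the witness is the next right pin $p_{2m+2}$: the slicing rule forces $y(p_{2m+2})$ to lie strictly between $y(p_{2m+1})$ (a newly-created vertical extreme) and the matching old extreme of the hull through $p_{2m}$, both of which are bounded by the minimum and maximum of the first-$k$ $y$-values. For even $k=2m$ with $m\ge 2$ the same slicing makes the skipped pin $p_{2m}$ itself the witness, the assumption $m\ge 2$ ensuring that the hull through $p_{2m-2}$ is nontrivial so the relevant old extreme lies inside the spanned range. The delicate case is $k=2$ with $p_1$ and $p_3$ of the same up/down type, say both $\u$: then $y(p_2)\in(0,y(p_1))$ sits on the wrong side of $\min(y(p_1),y(p_3))=y(p_1)$, so $p_2$ fails, and I would instead use $p_4$, whose slicing places $y(p_4)$ strictly inside $(y(p_1),y(p_3))$. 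This is precisely where the hypothesis $|w|\ge 4$ enters: indeed for $|w|=3$, permutations such as $\psi_{\d\rd\d}^\norigin=213$ from Table~\ref{table-short-decomps} are genuinely $\boxplus$-decomposable.

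For $\psi_w^\origin$ with $|w|\ge 3$ the same argument runs with $p_0$ prepended to the $x$-order; the $k=2$ edge case vanishes here because $\{p_0,p_1\}$ always straddles $y=0$, so $p_2$ (which is present as soon as $|w|\ge 2$) is an automatic witness and no descent to $p_4$ is needed. The main obstacle is this $k=2$ edge case in the $\norigin$ version: it is tempting to stop at $p_2$ as a witness, but in the same-type subcase one must instead descend to $p_4$, which is exactly what forces the hypothesis from $|w|\ge 3$ up to $|w|\ge 4$.
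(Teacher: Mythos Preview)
Your approach is sound and genuinely different from the paper's. The paper proceeds by induction on $|w|$: after verifying the base cases (from Table~\ref{table-short-decomps} for $\psi_w^\norigin$, and a short explicit list for $\psi_w^\origin$), it argues that appending one more pin to an already $\boxplus$-indecomposable pin permutation cannot introduce a decomposition, because a proper pin is never placed northeast or southeast of all existing points, nor immediately beside the origin. Your route instead works out the $x$-order of the pins explicitly and, for each candidate cut $k$, exhibits a later point whose height lies strictly inside the range spanned by the first $k$. This is more case-laden, but it makes the role of the hypothesis $|w|\ge 4$ completely transparent: you isolate the single configuration (the $k=2$ case with $p_1,p_3$ of the same vertical type) where the obvious witness $p_2$ fails and $p_4$ must be invoked.

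One gap to close: your computation of the $x$-order tacitly assumes that $w$ begins with a letter from $\{\u,\d\}$, so that the odd-indexed pins are the vertical ones and the order is $p_1,p_3,p_2,p_5,p_4,\dots$. But $\P$ is the \emph{factor} closure of $\rho(\{0,1\}^\ast)$, so $w$ may just as well begin with $\ru$ or $\rd$; in that parity the $x$-order is $p_2,p_1,p_4,p_3,\dots$ and the leftmost-$k$ sets differ from those you describe. The analysis for that second parity is entirely analogous---indeed easier, since the troublesome $k=2$ configuration does not arise there (for $k=2$ the leftmost pair is $\{p_1,p_2\}$ and $p_3$ is always a valid witness)---but it should at least be acknowledged.
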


%

\begin{proof}
We proceed by induction on the length of~$w$. The base case for the claim about~$\psi_w^\norigin$ follows from examining Table~\ref{table-short-decomps} for~$\psi_w^\norigin$. For the claim about~$\psi_w^\origin$, the base case follows by the following computations:~$\psi^\origin_{\u\ru\u}=1342$, $\psi^\origin_{\u\ru\d}=\psi^\origin_{\rd\u\ru}=2413$, $\psi^\origin_{\ru\u\ru}=1423$, $\psi^\origin_{\ru\d\rd}=\psi^\origin_{\d\rd\u}=3142$, $\psi^\origin_{\d\rd\d}=4213$, and $\psi^\origin_{\rd\d\rd}=4132$.

Now let~$v$ denote the prefix of~$w$ comprising all but the last letter, and suppose by induction that~$\psi_v^\origin$ (resp.,~$\psi_v^\norigin$) is $\boxplus$-indecomposable.
The only way in which~$\psi_w^\norigin$ (resp.,~$\psi_{w}^\origin$) could be $\boxplus$-decomposable is if the last letter corresponds to a point that is inserted northeast or southeast of all of~$\psi_{v}^\norigin$ (resp.,~$\psi_{v}^\origin$), or if it is inserted next to the origin. Neither case is possible since pins must separate the predecessor pin from all the earlier ones. This means that the final pin cannot be placed in the top-right or bottom-right corner of~$\psi_w^\norigin$ (resp.\,$\psi_{w}^\origin$), and also, since $|\psi_v^\norigin|\ge 4$ (resp., $|\psi_v^\origin|\ge 4$), there are at least two other pins whose positions come between the origin and the final pin. Thus~$\psi_w^\norigin$ and~$\psi_w^\origin$ are $\boxplus$-indecomposable.
\end{proof}

Next, we investigate the uniqueness of the words encoding these $\boxplus$-indecomposable permutations.  In this direction, we are primarily interested in the uniqueness of words for permutations without an origin, but it is easier to first establish the result for permutations with an origin. Note that the bound $|w|\geq 4$ is best possible because~$\psi^\origin_{\u\ru\d}=\psi^\origin_{\rd\u\ru}=2413$.

\begin{table}
\[
	\begin{array}{c|c}
		w&\psi_w^\origin\\
		\hline\\[-10pt]
		\mathsf{ur_uur_u}&13524\\
		\mathsf{ur_udr_d}&35142\\
		\mathsf{r_uur_uu}&14253\\
		\mathsf{r_uur_ud}&25314\\
		\mathsf{r_udr_du}&31452\\
		\mathsf{r_udr_dd}&42513
	\end{array}
	\quad\quad\quad
	\begin{array}{c|c}
		w&\psi_w^\origin\\
		\hline\\[-10pt]
		\mathsf{dr_dur_u}&31524\\
		\mathsf{dr_ddr_d}&53142\\
		\mathsf{r_dur_uu}&24153\\
		\mathsf{r_dur_ud}&35214\\
		\mathsf{r_ddr_du}&41352\\
		\mathsf{r_ddr_dd}&52413
	\end{array}
\]
\caption{The 12 words of length 4 in $\P$ and their corresponding permutations~$\psi_w^\origin$.}
\label{table-short-words-with-origin}
\end{table}%

\begin{proposition}\label{prop-long-perms-have-unique-words}
If~$\psi_v^\origin=\psi_w^\origin$ for words $v,w\in\P$ satisfying $|v|=|w|\ge 4$, then $v=w$.
\end{proposition}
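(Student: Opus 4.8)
The plan is to recover the word $w$ from the permutation $\psi_w^\origin$ by reading off the pin structure geometrically, and then show this recovery is forced, so that $\psi_v^\origin = \psi_w^\origin$ compels $v = w$. The starting point is that $\psi_w^\origin$ is, by Lemma~\ref{lem-pins-box-indecomposable}, $\boxplus$-indecomposable when $|w| \ge 3$, and moreover (by the same kind of argument) its plot has a rigid combinatorial skeleton: the origin is the unique leftmost point, and the remaining points split into those above and those below the origin's height, corresponding exactly to which letters of $w$ lie in $\{\u,\ru\}$ versus $\{\d,\rd\}$. So from $\psi_w^\origin$ alone we can already read off, for each position, whether the letter is ``up-type'' or ``down-type.'' What remains is to distinguish $\u$ from $\ru$ (and $\d$ from $\rd$), i.e.\ to identify which positions are the ``right pins'' and which are not, and among the right pins, to read the subscript.

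The key observation I would exploit is that in a rightward-yearning pin sequence the pins alternate in a constrained way: every second pin is a right pin, and the word $w$ is a factor of some $\rho(u)$, meaning that (except possibly at the two ends) the right pins sit in fixed parity positions. I would argue as follows. Fix an embedding realizing $\psi_w^\origin$. The pin $p_1$ is the unique point whose rectangular hull with the origin $p_0$ contains no other point of the permutation; this pins down $p_1$ and its direction (up or down relative to $p_0$) from the permutation. Inductively, given $p_0, \dots, p_{i}$ located, the point $p_{i+1}$ is the unique remaining point lying outside the rectangular hull of $\{p_0,\dots,p_i\}$ that separates $p_i$ from $\{p_0,\dots,p_{i-1}\}$ — and the crucial claim is that in a permutation of the form $\psi_w^\origin$ with $|w|\ge 4$ there is exactly one such point at each stage, so the entire pin sequence, hence $w$ itself (including the subscripts, which are determined by the direction of the preceding pin), is uniquely reconstructible from the plot. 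The base-case check $|w| = 4$ is handled by Table~\ref{table-short-words-with-origin}: the twelve listed permutations $\psi_w^\origin$ are visibly distinct, so uniqueness holds there; this also shows why $|w|\ge 4$ is needed, since $\psi^\origin_{\u\ru\d} = \psi^\origin_{\rd\u\ru} = 2413$ breaks it at length $3$.

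The main obstacle I anticipate is establishing that ``unique such point at each stage'' claim cleanly — i.e.\ proving that the pin-sequence reconstruction procedure never has a genuine choice. A priori, when we reach $p_{i+1}$ there could be several points of the plot lying outside the current rectangular hull that all separate $p_i$ from the earlier points; we must rule this out using the structure of $w$ as a factor of $\rho(\L^{\exparen{s_k}})$, or more simply using the fact (from Section~\ref{sec-decomp}) that deleting the last pin of $\psi_w^\origin$ yields $\psi_v^\origin$ for $v$ the prefix of $w$, together with $\boxplus$-indecomposability to show the last pin's position is extremal and uniquely identifiable. Concretely I would induct: assume $\psi_v^\origin = \psi_{v'}^\origin$ forces $v = v'$ for words of length $n-1$; given $\psi_w^\origin = \psi_{w'}^\origin$ of length $n$, argue that the last pin of each (the point realizing the final ``turn'') occupies a forced position, delete it to get equal permutations of length $n-1$ on the prefixes, conclude the prefixes are equal, and finally check that the direction and subscript of the last letter are determined by where that last point sits relative to the (now identical) prefix plot. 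The delicate part of the induction is justifying that the last pin is recognizable intrinsically from the permutation — here one uses that a rightward-yearning pin sequence with $|w|\ge 4$ has its ``most recently added'' extreme point in a position that no earlier configuration could mimic, which is exactly the separation property of proper pins invoked in the proof of Lemma~\ref{lem-pins-box-indecomposable}.
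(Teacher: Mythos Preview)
Your overall skeleton---induction on $|w|$, base case via Table~\ref{table-short-words-with-origin}, and the use of $\boxplus$-indecomposability from Lemma~\ref{lem-pins-box-indecomposable}---matches the paper's proof. But you have not actually closed the inductive step, and the gap is precisely the step you yourself flag as ``delicate'': identifying, purely from the plot of $\psi_w^\origin$, a specific point whose removal yields $\psi_{w'}^\origin$ for a word $w'$ that is a prefix of $w$. Your two attempts at this both remain at the level of hope. The forward reconstruction (``$p_{i+1}$ is the unique point outside the hull separating $p_i$ from the rest'') is asserted but not proved, and you concede that a priori several points could qualify. The backward version (``the last pin occupies a forced position'') is similarly unproved: if the final letter of $w$ is $\u$ or $\d$, the last pin is \emph{not} the rightmost point, nor is it obviously extremal in any direction you have named.

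The paper sidesteps this difficulty with a cleaner choice of anchor point: it peels off the \emph{rightmost} entry of $\psi_w^\origin$, which is unambiguously determined by the permutation and which must correspond to the last right pin---a letter $\ru$ or $\rd$ sitting in either the final or the penultimate position of $w$. This yields three cases according to whether that letter is final in both words, penultimate in both, or final in one and penultimate in the other. The first two cases reduce cleanly to the inductive hypothesis. The third case is impossible: deleting the rightmost point when it is the final pin gives a $\boxplus$-indecomposable $\psi_x^\circ$, whereas deleting it when it is an interior (penultimate) pin gives a genuine $\boxplus$-decomposition, contradiction via Lemma~\ref{lem-pins-box-indecomposable}. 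So the missing idea in your argument is to anchor on the rightmost point rather than on ``the last pin'', and then to handle the resulting two-position ambiguity by the case split just described.
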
%

\begin{proof}
We proceed by induction on $|v|=|w|$. The base case of $|v|=|w|=4$ follows from an examination of Table~\ref{table-short-words-with-origin}. Now suppose that $|v|=|w|\ge 5$. Note that the first entry of~$\psi_v^\origin$ must correspond to the origin~$p_0$, and every entry of~$\psi_v^\origin$ that lies above the origin must correspond to $\u$ or $\ru$, while every entry that lies below the origin must correspond to $\d$ or $\rd$.

Consider the rightmost entry of~$\psi_v^\origin$, which we may assume lies above the origin as the other case follows by a symmetrical argument. In this case, the last right step in both~$v$ and~$w$ is encoded by the letter $\ru$, and this letter is either the final or penultimate letter of both~$v$ and~$w$.

If $\ru$ is the final letter of both~$v$ and~$w$, then we can remove it---the permutations are still equal, and thus by induction, so are the words~$v$ and~$w$ with the last letter removed from each. Similarly, if $\ru$ is the penultimate letter in both~$v$ and~$w$, then the final letter of each word must be the same (corresponding to the second to last entry of~$\psi_v^\origin=\psi_w^\origin$), and again we can remove it and apply induction. In either case, we conclude that $v=w$.

It remains to consider the case where, without loss of generality, $\ru$ is both the final letter of~$v$ and the penultimate letter of~$w$. Thus we have, say, $v=x\ru$ and~$w=y\ru \ell$ where $x$ and $y$ are words of lengths at least four and three, respectively, and $\ell\in\{\u,\d\}$. Removing the rightmost point of~$\psi_v^\origin=\psi_{w}^\origin$ corresponds in each case to removing this final $\ru$. In the case of~$\psi_v^\origin$, this leaves us with~$\psi_x^\origin$, since $\ru$ was the last pin, whereas for~$\psi_{w}^\origin$ we obtain~$\psi_y^\origin\boxplus\psi_\ell^\origin$ since $\ru$ was an interior pin. However, we must have~$\psi_x^\origin=\psi_y^\origin\boxplus\psi_\ell^\origin$, and this is impossible since~$\psi_x^\origin$ is $\boxplus$-indecomposable by Lemma~\ref{lem-pins-box-indecomposable}, while~$\psi_y^\origin\boxplus\psi_\ell^\origin$ is not (note here that $\psi_\ell^\origin$ is of length two).
\end{proof}

The analogue to Proposition~\ref{prop-long-perms-have-unique-words} for pin sequences without an origin must account for the fact that~${\psi^\norigin_{\ru x}=\psi^\norigin_{\rd x}}$ for all words $x\in\P$. We begin, in fact, with a preliminary result that relates permutations with an origin to permutations without an origin. It would be possible to define the mapping~$\phi$ from Proposition~\ref{prop-origin-to-norigin} explicitly for shorter words as well, but we do not need to do so.

\begin{proposition}\label{prop-origin-to-norigin}
	If $\psi^\norigin_v=\psi^\origin_w$ for words $v,w\in\P$ satisfying $|v|=|w|+1\geq 5$, then $w=\phi(v)$, where~$\phi$ is defined as follows:
	\[\begin{array}{ccc}
	v&\quad&\phi(v)\\
	\hline
	\u\ru x&&\rd x\\
	\d\rd x&&\ru x\\
	\begin{aligned} \ru\u\ru x\\[-3pt] \rd\u\ru x\end{aligned}&\Big\}&\d\rd x\\
	\begin{aligned} \ru\d\rd x\\[-3pt] \rd\d\rd x\end{aligned}&\Big\}&\u\ru x
\end{array}\]
\end{proposition}

\begin{proof}
It is straightforward to verify that~$\psi_v^\norigin=\psi_{\phi(v)}^\origin$ in each case. Now suppose that $v,w\in\P$ have lengths at least~$5$ and~$4$, respectively, such that $\psi^\norigin_v=\psi^\origin_w$. By the above, we have $\psi^\origin_w=\psi^\origin_{\phi(v)}$, and hence by Proposition~\ref{prop-long-perms-have-unique-words} it follows that $w=\phi(v)$.
\end{proof}

\begin{proposition}\label{prop-long-perms-have-unique-words-norigin}
If~$\psi^\norigin_v=\psi^\norigin_w$ for words $v,w\in\P$ satisfying $|v|=|w|\ge 5$, then either~${v=w}$, or~${v=\ru x}$ and ${w=\rd x}$ for some word~$x\in\P$.
\end{proposition}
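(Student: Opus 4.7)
The plan is to prove the result by induction on $n=|v|=|w|\geq 5$, closely mirroring the proof of Proposition~\ref{prop-long-perms-have-unique-words}. The inductive step reduces to a shorter instance of the proposition by identifying and deleting the pin(s) corresponding to the rightmost entry of $\pi=\psi^\norigin_v=\psi^\norigin_w$. Since complementation (swapping $\u\leftrightarrow\d$ and $\ru\leftrightarrow\rd$) preserves $\P$ and intertwines with complementation of the underlying permutation, we may always assume that the rightmost pin of $\pi$ is encoded $\ru$, i.e., lies above the middle of the surrounding hull.

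For the inductive step ($n\geq 6$), since right pins are placed to the right of all earlier points, the rightmost entry of $\pi$ is the last right pin of each of $v$ and $w$. This is either the last letter (``Case~A'': $v(n)=\ru$ and hence $v(n-1)=\u$) or the penultimate letter (``Case~B'': $v(n-1)=\ru$, $v(n-2)=\u$, and $v(n)\in\{\u,\d\}$), and analogously for $w$. If $v$ is in Case~A and $w$ in Case~B (or vice versa), then removing the rightmost entry of $\pi$ produces $\psi^\norigin_{v(1)\cdots v(n-1)}$ on one side but $\psi^\norigin_{w(1)\cdots w(n-2)}\boxplus\psi^\origin_{w(n)}$ on the other; the former is $\boxplus$-indecomposable by Lemma~\ref{lem-pins-box-indecomposable} (as $n-1\geq 5$), while the latter is manifestly $\boxplus$-decomposable (both factors have length at least~$2$), a contradiction. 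When both $v$ and $w$ are in Case~A, we delete $v(n)=w(n)=\ru$ from each and invoke the inductive hypothesis on the resulting length-$(n-1)$ words. When both are in Case~B, the vertical position of the second-rightmost entry of $\pi$ (extremal, being topmost if $\u$ and bottommost if $\d$) forces $v(n)=w(n)$, so we again delete the last letter and apply induction. In either sub-case, if the inductive hypothesis returns the shorter words in the form $\ru x$ and $\rd x$, then reattaching the common last letter produces the desired first-letter swap for $v$ and $w$.

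For the base case $n=5$, the Case~A vs.\ Case~B contradiction argument still applies (as $n-1=4$ suffices for Lemma~\ref{lem-pins-box-indecomposable}), leaving only sub-cases A-A and B-B. Each reduces to verifying the conclusion for pairs of length-$4$ words $v',w'\in\P$ sharing a prescribed last letter: namely $\u$ in sub-case A-A and $\ru$ in sub-case B-B (the sub-cases where the rightmost pin is encoded $\rd$ follow by complementation). The alternation and subscript rules for $\P$ cut the number of length-$4$ candidates down to at most four when the last letter is $\u$ (explicitly, $\ru\u\ru\u$, $\ru\d\rd\u$, $\rd\u\ru\u$, $\rd\d\rd\u$) and only two when the last letter is $\ru$ (namely $\u\ru\u\ru$ and $\d\rd\u\ru$), and a direct computation of the associated permutations confirms the claim in each case. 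The main obstacle is precisely this base case: Proposition~\ref{prop-long-perms-have-unique-words-norigin} genuinely fails at length~$4$ (for example, $\psi^\norigin_{\u\ru\u\ru}=\psi^\norigin_{\rd\d\rd\d}=2413$, and these two words are not related by a first-letter swap), so the induction cannot begin at length~$4$; one must exploit the additional constraint that the length-$4$ words arising in the base case share a fixed last letter in order to rule out such anomalies.
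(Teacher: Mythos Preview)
Your approach is correct but takes a genuinely different route from the paper. You run a fresh induction on $|v|=|w|$, peeling off the pin corresponding to the rightmost entry of $\pi$, using Lemma~\ref{lem-pins-box-indecomposable} to rule out the mixed Case~A/Case~B situation, and finishing with an explicit length-$4$ check. The paper instead avoids a second induction entirely: it observes that deleting the (phantom) origin from $\psi^\circ_v$ turns the first real pin into a new origin, giving an explicit map $\phi\colon\P\to\P$ with $|\phi(v)|=|v|-1$ and $\psi^\norigin_v=\psi^\circ_{\phi(v)}$; one then applies the already-proved Proposition~\ref{prop-long-perms-have-unique-words} to obtain $\phi(v)=\phi(w)$ and reads off the conclusion from the short table defining $\phi$. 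The paper's reduction is slicker and reuses prior work, while yours is self-contained but essentially re-derives Proposition~\ref{prop-long-perms-have-unique-words} in parallel. One point worth tightening: your complementation WLOG (``assume the rightmost pin of $\pi$ is encoded $\ru$'') tacitly uses that the subscript on the last right pin is determined by $\pi$ and hence agrees in $v$ and $w$; this is true (the value of the rightmost entry, together with the extremality of the second-rightmost entry in Case~B, fixes it), but it is the one step in your outline that is asserted rather than argued.
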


Note that~$\psi_{\u\ru\u\ru}^\norigin=\psi_{\rd\d\rd\d}^\norigin=2413$, so the restriction to words of length at least $5$ is best possible.

\begin{proof}
Consider words $v,w\in\P$ of length $\ge 5$ such that~$\psi^\norigin_v=\psi^\norigin_w$. By Proposition~\ref{prop-origin-to-norigin}, we have 
\[
	\psi^\origin_{\phi(v)}=\psi^\norigin_v=\psi^\norigin_w=\psi^\origin_{\phi(w)},
\]
and since $|\phi(v)|=|v|-1 \ge 4$, Proposition~\ref{prop-long-perms-have-unique-words} implies that~$\phi(v)=\phi(w)$. By the definition of~$\phi$, it now follows that either $v=w$, or~$v$ and~$w$ differ only in their first letter, as required. 
\end{proof}

The combination of Theorem~\ref{thm-Csk-description} with Propositions~\ref{prop-long-perms-have-unique-words} and~\ref{prop-long-perms-have-unique-words-norigin} provides us with a guarantee that any permutation~$\pi\in\C^{\exparen{s_k}}$ corresponds to an almost-unique collection of words $v_1$, $v_2$, $\dots$, $v_\ell\in\P^{\exparen{s_k}}$, providing $|v_1|\ge 5$ and $|v_i|\ge 4$ for $i\ge 2$. We finish this section by recording the precise classification for $\boxplus$-indecomposable permutations.

\begin{proposition}\label{prop-boxplus-indecomp}
A subpermutation~$\pi$ of a rightward-yearning pin sequence is $\boxplus$-indecomposable if and only if~$\pi\in\{1,12,21,132,312\}$ or~$\pi=\psi_w^\norigin$ for some word~$w\in\P$ with $|w|\ge 4$.
\end{proposition}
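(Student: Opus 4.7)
First I would split the biconditional. The $(\Leftarrow)$ direction is bookkeeping: each of $\{1, 12, 21, 132, 312\}$ can be read off from Table~\ref{table-short-decomps} as $\boxplus$-indecomposable and as a subpermutation of a short rightward-yearning pin sequence, and for $\pi = \psi_w^\norigin$ with $|w| \ge 4$, $\boxplus$-indecomposability is exactly Lemma~\ref{lem-pins-box-indecomposable}.

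For $(\Rightarrow)$, the plan is to invoke Proposition~\ref{prop-boxplus-decomp} and write
\[
\pi = \psi_{w_1}^\norigin \boxplus \psi_{w_2}^\origin \boxplus \cdots \boxplus \psi_{w_k}^\origin
\]
with nonempty $w_i \in \P$. A length count, using $|\psi_{w_1}^\norigin| = |w_1| \ge 1$ and $|\psi_{w_i}^\origin| = |w_i|+1 \ge 2$ for $i \ge 2$, then controls the decomposition: splitting this expression into a $\boxplus$-factorization at any position $i$ with $1 \le i \le k-1$ must, by $\boxplus$-indecomposability, have one side of length at most $1$. For $k \ge 3$, splitting at $i=2$ puts both sides at length $\ge 2$, a contradiction, so $k \le 2$; and for $k = 2$, the split at $i=1$ forces $|w_1| = 1$, so that $\pi = 1 \boxplus \psi_{w_2}^\origin = \psi_{w_2}^\origin$.

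I would then finish by cases. If $k = 1$ with $|w_1| \ge 4$, the second alternative holds directly. If $k = 1$ with $|w_1| \le 3$, or $k = 2$ with $|w_2| \le 2$, a finite enumeration of alternating words in $\P$ of those lengths (cross-checking against Table~\ref{table-short-decomps}) yields exactly $\pi \in \{1, 12, 21, 132, 312\}$. The remaining case $k = 2$ with $|w_2| \ge 3$ is the main obstacle: Lemma~\ref{lem-pins-box-indecomposable} confirms $\pi = \psi_{w_2}^\origin$ is $\boxplus$-indecomposable, but to conclude I must exhibit $w' \in \P$ with $|w'| = |w_2|+1 \ge 4$ and $\psi_{w'}^\norigin = \pi$. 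The plan here is a geometric construction: introduce a new phantom origin placed far enough above or below the existing point configuration so that the old origin qualifies as a valid first pin, then reorder and relabel the $|w_2|+1$ points of $\psi_{w_2}^\origin$ as a pin sequence yielding a word $w' \in \alphabet^\ast$ whose letters alternate between $\{\u,\d\}$ and $\{\ru,\rd\}$, and verify by induction on $|w_2|$ that this produces a proper rightward-yearning pin sequence realizing the same permutation as $\psi_{w_2}^\origin$ and that $w'$ arises as a factor of $\rho(u)$ for some $u \in \{0,1\}^\ast$ (hence lies in $\P$). The delicate point---and where I expect the real work---is that a naive prepending of a letter to $w_2$ need not yield a valid alternating word nor preserve the permutation (as one can already see when $w_2 = \u\ru$, where the correct $w'$ is $\d\rd\u$ rather than anything obtained by direct prefixing), so the construction must carefully track how the first letter of $w_2$ and the placement of the new phantom origin interact with the hull of the initial pins.
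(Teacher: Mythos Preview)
Your overall structure matches the paper's: both directions use Lemma~\ref{lem-pins-box-indecomposable} and Table~\ref{table-short-decomps} for $(\Leftarrow)$, and invoke Proposition~\ref{prop-boxplus-decomp} for $(\Rightarrow)$. The paper, however, dispatches the $(\Rightarrow)$ direction in one line, asserting that $\boxplus$-indecomposability forces ``all of the words $w_i$ except one'' to be empty, from which $\pi=\psi_{w_j}^\norigin$ follows via the identities $\psi_\varepsilon^\origin=1$ and $\psi_\varepsilon^\norigin\boxplus\psi_w^\origin=\psi_w^\norigin$. You are right to be more careful here: taking Proposition~\ref{prop-boxplus-decomp} at face value (all $w_i$ nonempty), the case $k=2$, $|w_1|=1$, $|w_2|\ge 3$ survives exactly as you say, and in that case one genuinely needs to rewrite $\psi_{w_2}^\origin$ as some $\psi_{w'}^\norigin$ with $w'\in\P$.

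Your geometric plan for this step is sound but heavier than necessary. The paper later (in the proof of Proposition~\ref{prop-long-perms-have-unique-words-norigin}) makes exactly the construction you need explicit, and it is just a short case analysis on the first one or two letters of $w_2$: if $w_2=\rd x$ take $w'=\u\ru x$; if $w_2=\ru x$ take $w'=\d\rd x$; if $w_2=\d\rd x$ take $w'=\rd\u\ru x$; and if $w_2=\u\ru x$ take $w'=\rd\d\rd x$. In each case $w'\in\P$, $|w'|=|w_2|+1\ge 4$, and one checks directly that $\psi_{w'}^\norigin=\psi_{w_2}^\origin$. This replaces your inductive geometric argument with four one-line verifications and sidesteps the ``delicate point'' you anticipated about how the new phantom origin interacts with the initial hull.
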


\begin{proof}
For permutations of lengths at most three, the result follows from an examination of Table~\ref{table-short-decomps}. Now suppose that~$\pi$ is a subpermutation of a rightward-yearning pin sequence and $|\pi|\ge 4$. If~$\pi=\psi^\norigin_w$ for some word~$w\in\P$, then $|w|\ge 4$, so Lemma~\ref{lem-pins-box-indecomposable} shows that~$\pi$ is $\boxplus$-indecomposable. Conversely, suppose that~$\pi$ is $\boxplus$-indecomposable. Since~$\pi$ is a subpermutation of a rightward-yearning pin sequence, Proposition~\ref{prop-boxplus-decomp} shows that there are nonempty words~$w_1$, $\dots$,~$w_k\in\P$ such that
\[
	\pi=
	\psi_{w_1}^\norigin\boxplus\psi_{w_2}^\origin\boxplus\cdots\boxplus\psi_{w_k}^\origin.
\]
Obviously, the only way that~$\pi$ could be $\boxplus$-indecomposable in this case would be if $k=1$. That is, $\pi=\psi_{w_1}^\norigin$, and thus $|w_1|=|\pi|\ge 4$.
\end{proof}

Finally, let us note that for any word $w\in\P$ with $|w|\geq 5$, Propositions~\ref{prop-long-perms-have-unique-words}---\ref{prop-long-perms-have-unique-words-norigin} show that there are at most three different expressions for $\psi_w^\norigin$, namely 
\[
\psi_w^\norigin,\quad \psi_{w'}^\norigin,\quad \psi_{x}^\norigin\boxplus\psi_{\phi(w)}^\origin = 1\boxplus \psi_{\phi(w)}^\origin=\psi_{\phi(w)}^\origin 
\]
where~$w$ and $w'$ satisfy $\phi(w)=\phi(w')$ (and thus differ in at most their first letter), and $x$ is any letter in $\alphabet$.

\section{Well-Quasi-Order}
\label{sec-wqo}

The fact that the classes $\C^{\exparen{s_k}}$ are, in a sense, ``$\boxplus$-closed'' enables us to establish that these classes are all wqo, although this takes a bit of preparation. Recall that the binary languages~$\L^{\exparen{s_k}}$ are wqo under the factor order by Proposition~\ref{prop-Lsk-wqo}. First we must lift this property to our pin sequence languages $\P^{\exparen{s_k}}$.

\begin{proposition}
\label{prop-Psk-wqo}
For every sequence $(s_k)$ of positive integers, the set $\P^{\exparen{s_k}}\subseteq\alphabet^\ast$ of words is wqo under the factor order.
\end{proposition}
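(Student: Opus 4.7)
The plan is to reduce well-quasi-ordering of $\P^{\exparen{s_k}}$ under the factor order to the corresponding property for the binary language~$\L^{\exparen{s_k}}$, established in Proposition~\ref{prop-Lsk-wqo}. The key device is a canonical ``parity-based'' decomposition of each word in $\P^{\exparen{s_k}}$ together with a padding operation that aligns the word with the substitution~$\rho$.

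First I would observe that every word $w\in\P^{\exparen{s_k}}$ arises as a factor of $\rho(u)$ for some $u\in\L^{\exparen{s_k}}$, and that in $\rho(u)$ the letters come in consecutive pairs of the form $\d\rd$ or $\u\ru$. Depending on the parity of the starting and ending positions of $w$ inside $\rho(u)$, one obtains a unique decomposition $w=x\,\rho(v)\,y$ with $x\in\{\varepsilon,\rd,\ru\}$, $y\in\{\varepsilon,\d,\u\}$, and $v\in\{0,1\}^\ast$ the binary word whose image forms the ``aligned middle'' of $w$. I would then verify that $v$, and more generally the padded binary word $\hat v$ obtained by prepending a $0$ (resp.\ $1$) when $x=\rd$ (resp.\ $x=\ru$) and appending a $0$ (resp.\ $1$) when $y=\d$ (resp.\ $y=\u$), is itself a factor of $u$ and therefore lies in $\L^{\exparen{s_k}}$. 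By construction, $\rho(\hat v)=\tilde x\, w\,\tilde y$, where $\tilde x$ (resp.~$\tilde y$) is either empty or the unique letter completing the pair containing $x$ (resp.~$y$).

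Next I would assume for contradiction that there is an infinite antichain $w_1,w_2,\ldots$ in $\P^{\exparen{s_k}}$. Writing each $w_i=x_i\,\rho(v_i)\,y_i$ as above and passing to a subsequence via the pigeonhole principle (there are at most nine possibilities for the pair $(x_i,y_i)$), I may assume $x_i=x$ and $y_i=y$ are constant along the sequence, so the corresponding $\tilde x_i=\tilde x$ and $\tilde y_i=\tilde y$ are constant too. The padded sequence $\hat v_1,\hat v_2,\ldots$ lies in $\L^{\exparen{s_k}}$, which is wqo by Proposition~\ref{prop-Lsk-wqo}, so there exist indices $i<j$ with $\hat v_i$ a factor of $\hat v_j$. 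Applying $\rho$ shows that $\rho(\hat v_i)=\tilde x\, w_i\,\tilde y$ is a factor of $\rho(\hat v_j)=\tilde x\, w_j\,\tilde y$.

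The final step is to show that this forces $w_i$ itself to be a factor of $w_j$, which is where the constancy of $\tilde x,\tilde y$ is essential. In each of the four cases for $(\tilde x,\tilde y)\in\{\varepsilon,\text{letter}\}^2$, a short calculation with positions inside $\rho(\hat v_j)$ shows that any occurrence of $\rho(\hat v_i)$ in $\rho(\hat v_j)$ can be trimmed consistently at both ends by deleting the distinguished letter $\tilde x$ on the left and $\tilde y$ on the right, yielding an occurrence of $w_i$ inside $w_j$. This contradicts the antichain assumption and completes the proof. The only mildly subtle step is the boundary bookkeeping that guarantees the padded binary word $\hat v$ always lies in $\L^{\exparen{s_k}}$; once that encoding is set up, the remainder is routine pigeonhole plus an invocation of Proposition~\ref{prop-Lsk-wqo}.
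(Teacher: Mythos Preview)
Your argument is correct and rests on the same underlying idea as the paper's---split $\P^{\exparen{s_k}}$ into finitely many pieces according to the boundary behaviour relative to $\rho$, and reduce each piece to the well-quasi-ordering of $\L^{\exparen{s_k}}$ from Proposition~\ref{prop-Lsk-wqo}. The paper's execution is considerably shorter: it simply observes that
\[
	\P^{\exparen{s_k}}
	=
	\rho(\L^{\exparen{s_k}})\cup\Delta_L(\rho(\L^{\exparen{s_k}}))\cup\Delta_R(\rho(\L^{\exparen{s_k}}))\cup\Delta_L\Delta_R(\rho(\L^{\exparen{s_k}})),
\]
where $\Delta_L$ and $\Delta_R$ delete the first and last letter respectively. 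Since $\rho$, $\Delta_L$, and $\Delta_R$ are each order-preserving for the factor order, all four sets are wqo, and a finite union of wqo sets is wqo. Your padding-and-trimming route accomplishes the same thing by hand, with nine pigeonhole cases instead of four, and your final step---that $\tilde x\,w_i\,\tilde y$ being a factor of $\tilde x\,w_j\,\tilde y$ forces $w_i$ to be a factor of $w_j$---is indeed routine once $|\tilde x|,|\tilde y|\le 1$. The paper's formulation via order-preserving maps is cleaner and avoids the boundary bookkeeping entirely, but your version is equally valid.
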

\begin{proof}
Given any word~$w$, we define $\Delta_L(w)$ to be the word obtained by removing the first letter of~$w$ (assuming that~$w$ is nonempty). If~$u$ is contained as a factor in~$w$, it follows that $\Delta_L(u)$ is contained as a factor in $\Delta_L(w)$. Therefore, if the language $\mathcal{S}$ is wqo under the factor order, then the language $\Delta_L(\mathcal{S})$ is wqo under the factor order. We similarly define $\Delta_R(w)$ to be the word obtained by removing the last letter of~$w$.

By definition, for every word $v\in\P^{\exparen{s_k}}$, there is some word~$w\in\L^{\exparen{s_k}}$ for which~$v$ is a factor of $\rho(w)$. Indeed, if we take~$w$ to be minimal, then~$v$ comprises all but possibly the first and last letter of $\rho(w)$. Thus we can express $\P^{\exparen{s_k}}$ as
\[
	\P^{\exparen{s_k}}
	=
	\rho(\L^{\exparen{s_k}})
	\cup\Delta_L(\rho(\L^{\exparen{s_k}}))
	\cup\Delta_R(\rho(\L^{\exparen{s_k}}))
	\cup\Delta_L(\Delta_R(\rho(\L^{\exparen{s_k}}))).
\]
This shows that $\P^{\exparen{s_k}}$ is the union of four wqo posets, and is therefore wqo itself.
\end{proof}

To go from the languages $\P^{\exparen{s_k}}\subseteq\alphabet^\ast$ to the permutation classes $\C^{\exparen{s_k}}$, we need to first recall the setting and statement of Higman's lemma. Given a poset $(X,\le)$, we denote by $X^\ast$ the set (or language) of all words with letters from $X$. The \emph{generalized subword order} on $X^\ast$ is defined by stipulating that the word $v=v(1)\cdots v(k)$ is contained in the word~$w=w(1)\cdots w(n)$ if and only if~$w$ has a subsequence~$w({i_1})w({i_2})\cdots w({i_k})$ such that $v(j)\le w({i_j})$ for all indices $j$. The following is a weakened version of Higman's original result.

\newtheorem*{higmans-lemma}{\rm \textbf{Higman's lemma}~\cite{higman:ordering-by-div:}}
\begin{higmans-lemma}
If $(X,\le)$ is wqo, then $X^\ast$ is also wqo, under the generalized subword order.
\end{higmans-lemma}

Higman's lemma immediately implies (via Proposition~\ref{prop-Psk-wqo}) that the poset $(\P^{\exparen{s_k}})^\ast$ is wqo under the generalized subword order. Note that in this poset, the ``letters'' of a ``word'' are in fact words from~$\P^{\exparen{s_k}}$, which are themselves defined over the alphabet $\alphabet$. Thus an element $w\in(\P^{\exparen{s_k}})^\ast$ is a word of the form $w=w(1)w(2)\cdots w(\ell)$ for some $\ell\ge 0$ and words $w(1),w(2),\dots,w(\ell)\in\P^{\exparen{s_k}}$.

Now define a mapping~$\Phi : (\P^{\exparen{s_k}})^\ast\to\C^{\exparen{s_k}}$ by
\[
	\Phi(w(1)w(2)\cdots w(\ell))
	=
	\psi_{w(1)}^\norigin\boxplus\psi_{w(2)}^\origin\boxplus\cdots\boxplus\psi_{w(\ell)}^\origin.
\]
Proposition~\ref{prop-Psk-to-Csk} shows that the mappings~$w(i)\mapsto\psi_{w(i)}^\norigin$ and~$w(i)\mapsto\psi_{w(i)}^\origin$ from~$\P^{\exparen{s_k}}$ to~$\C^{\exparen{s_k}}$ are both order-preserving. It then follows from the definition of $\boxplus$ that~$\Phi$ is order-preserving. Theorem~\ref{thm-Csk-description} further implies that~$\Phi$ maps surjectively onto $\C^{\exparen{s_k}}$.

The main result of this section then follows from the general fact that if the domain of an order-preserving mapping is wqo, then its range must be as well. (This fact is easily proved by contradiction, for one could pull back any infinite antichain in the range of such a mapping to find an infinite antichain in its domain.)

\begin{proposition}
\label{prop-Csk-wqo}
For every sequence $(s_k)$ of positive integers, the permutation class $\C^{\exparen{s_k}}$ is wqo.
\end{proposition}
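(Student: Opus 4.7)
The proof is essentially assembled in the three paragraphs that immediately precede the proposition, so my plan is simply to formalise that chain of implications. I would begin by invoking Proposition~\ref{prop-Psk-wqo}, which tells us that the language $\P^{\exparen{s_k}}$ is wqo under the factor order. Treating the words of $\P^{\exparen{s_k}}$ as the letters of a new alphabet, Higman's lemma then yields that $(\P^{\exparen{s_k}})^\ast$ is wqo under the generalized subword order. This is the crucial wqo input.

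Next I would study the map
\[
	\Phi \st (\P^{\exparen{s_k}})^\ast \to \C^{\exparen{s_k}},
	\qquad
	\Phi(w) = \psi_{w(1)}^\norigin \boxplus \psi_{w(2)}^\origin \boxplus \cdots \boxplus \psi_{w(k)}^\origin,
\]
defined in the paragraph before the statement. Theorem~\ref{thm-Csk-description} directly asserts that every permutation of $\C^{\exparen{s_k}}$ has such a $\boxplus$ decomposition with factors drawn from $\P^{\exparen{s_k}}$, so $\Phi$ is surjective. For order-preservation, suppose $u \le v$ in $(\P^{\exparen{s_k}})^\ast$ via some embedding $u(j)$ into $v(i_j)$ as factors. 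Proposition~\ref{prop-Psk-to-Csk} guarantees that each $\psi_{u(j)}^\bullet$ (with whichever decoration) is contained in $\psi_{v(i_j)}^\bullet$, and from the definitions of $\oplus$-like operations it is immediate that $\boxplus$ preserves containment coordinatewise, while the ``letters'' of $v$ not used in the embedding can simply be deleted from $\Phi(v)$ to obtain a subpermutation matching $\Phi(u)$.

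The proof then concludes by the standard fact that the order-preserving image of a wqo poset is wqo: any infinite antichain in $\C^{\exparen{s_k}}$ could be pulled back (by choosing one preimage of each element) to an infinite antichain in $(\P^{\exparen{s_k}})^\ast$, contradicting Higman's lemma.

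I do not anticipate a real obstacle here; the entire structural work has been done in the earlier sections. The only point that deserves a careful sentence is the verification that the $\boxplus$ operation is monotone in both arguments with respect to the subpermutation order, so that a factor embedding in $(\P^{\exparen{s_k}})^\ast$ genuinely lifts to a subpermutation embedding of the associated $\boxplus$-products; once this is said cleanly, the rest is a direct citation of Proposition~\ref{prop-Psk-wqo}, Higman's lemma, and Theorem~\ref{thm-Csk-description}.
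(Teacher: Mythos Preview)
Your proposal is correct and follows exactly the argument the paper assembles in the paragraphs preceding the proposition: Proposition~\ref{prop-Psk-wqo} plus Higman's lemma give wqo on $(\P^{\exparen{s_k}})^\ast$, the map $\Phi$ is surjective by Theorem~\ref{thm-Csk-description} and order-preserving via Proposition~\ref{prop-Psk-to-Csk} and the monotonicity of $\boxplus$, and the order-preserving image of a wqo poset is wqo. Your attention to spelling out why $\boxplus$ is monotone is appropriate, as the paper leaves this implicit.
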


\section{Distinct Enumerations}
\label{sec-enum}

Having shown that the classes $\C^{\exparen{s_k}}$ are all wqo, we now finish the proof of Theorem~\ref{thm-wqo-not-algebraic} by establishing that these classes have distinct enumerations.

\begin{theorem}
\label{thm-distinct-enum}
	Suppose that $(s_k)$ and $(t_k)$ are distinct sequences of positive integers, and that $(s_k)$ lexicographically precedes $(t_k)$. Then there exists an integer $N$ such that 
	\[
		\bigcup_{n\le N}\C_n^{\exparen{t_k}} \subsetneq \bigcup_{n\le N}\C_n^{\exparen{s_k}}.
	\]
	In particular, the classes $\C^{\exparen{s_k}}$ and $\C^{\exparen{t_k}}$ have distinct enumeration sequences. 
\end{theorem}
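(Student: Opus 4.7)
The plan is to take $N = 2M - 1$, where $M$ is the integer supplied by Proposition~\ref{prop-words-distinct-enum}, and to exhibit a single $\boxplus$-indecomposable permutation $\pi$ of length $N$ lying in $\C^{\exparen{s_k}} \setminus \C^{\exparen{t_k}}$. To fix $\pi$, choose any binary word $w \in \L^{\exparen{s_k}}_M \setminus \L^{\exparen{t_k}}_M$ (which exists by Proposition~\ref{prop-words-distinct-enum}), let $v$ be the length-$(2M-1)$ prefix of $\rho(w)$, and set $\pi = \psi^\norigin_v$. Explicitly, $v = \rho(w_1)\rho(w_2)\cdots\rho(w_{M-1})x$, where $x \in \{\d, \u\}$ is the first letter of $\rho(w_M)$; in particular, $v$ begins with a letter in $\{\d,\u\}$.

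The inclusion $\bigcup_{n \le N}\C^{\exparen{t_k}}_n \subseteq \bigcup_{n \le N}\C^{\exparen{s_k}}_n$ reduces, via Theorem~\ref{thm-Csk-description}, to the word-level inclusion $\P^{\exparen{t_k}}_k \subseteq \P^{\exparen{s_k}}_k$ for every $k \le 2M - 1$. Any $v' \in \P^{\exparen{t_k}}_k$ arises as a factor of $\rho(u)$ for some $u \in \L^{\exparen{t_k}}$, and a short parity argument that splits on whether $v'$ starts at an odd or an even position inside $\rho(u)$ shows that one may take $|u| \le \lceil k/2 \rceil + 1$; for $k \le 2M-1$ this is at most $M$. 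Proposition~\ref{prop-words-distinct-enum} then supplies $u \in \L^{\exparen{t_k}}_{\le M} \subseteq \L^{\exparen{s_k}}_{\le M}$, so $v' \in \P^{\exparen{s_k}}$, as required.

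For the strict containment, $\pi \in \C^{\exparen{s_k}}$ is immediate since $v$ is a factor of $\rho(w) \in \P^{\exparen{s_k}}$. To show $\pi \notin \C^{\exparen{t_k}}$, note first that $|v| = 2M - 1 \ge 5$, so Lemma~\ref{lem-pins-box-indecomposable} gives that $\pi$ is $\boxplus$-indecomposable. If $\pi$ were in $\C^{\exparen{t_k}}$, Theorem~\ref{thm-Csk-description} combined with Proposition~\ref{prop-boxplus-indecomp} would force $\pi = \psi^\norigin_{v'}$ for some $v' \in \P^{\exparen{t_k}}$ with $|v'| = |v|$; Proposition~\ref{prop-long-perms-have-unique-words-norigin} then says that either $v' = v$, or one of $v, v'$ equals $\ru y$ while the other equals $\rd y$ for some $y \in \P$. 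Since $v$ begins with $\d$ or $\u$, the latter option is excluded and $v' = v \in \P^{\exparen{t_k}}$. But then $v$ is a factor of $\rho(u)$ for some $u \in \L^{\exparen{t_k}}$, and since $v[1] \in \{\d,\u\}$ the embedding must begin at an odd position of $\rho(u)$; reading off the corresponding letters of $u$ then recovers $w$ as a factor of $u$, placing $w$ in the factor-closed set $\L^{\exparen{t_k}}$ and contradicting the choice of $w$.

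The main obstacle is committing to $N = 2M - 1$ rather than the more tempting $N = 2M$. Using $\pi = \psi^\norigin_{\rho(w)}$ of length $2M$ would make the non-membership argument almost identical, but the opposite inclusion $\C^{\exparen{t_k}}_{2M} \subseteq \C^{\exparen{s_k}}_{2M}$ can genuinely fail: a length-$2M$ factor of $\rho(u)$ starting at an even position needs $|u| = M+1$, and Proposition~\ref{prop-words-distinct-enum} gives no control over $\L^{\exparen{t_k}}_{M+1}$ versus $\L^{\exparen{s_k}}_{M+1}$. Trimming a single letter from $\rho(w)$ keeps every required $u$ at length at most $M$, which is the largest regime where the two languages are ordered.
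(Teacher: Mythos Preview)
Your approach is essentially identical to the paper's: same choice $N=2M-1$, same distinguishing permutation $\pi=\psi^\norigin_v$ where $v$ is $\rho(w)$ with its final letter deleted, and the same chain of appeals to Theorem~\ref{thm-Csk-description}, Lemma~\ref{lem-pins-box-indecomposable}, Proposition~\ref{prop-boxplus-indecomp}, and Proposition~\ref{prop-long-perms-have-unique-words-norigin}. Your parity argument for recovering $w$ as a factor of $u$ is slightly more direct than the paper's two-step route (first $\rho(w)\notin\P^{\exparen{t_k}}$, then $x\notin\P^{\exparen{t_k}}$), but the content is the same.

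There is one arithmetic slip to fix. You assert that the parity argument gives $|u|\le \lceil k/2\rceil+1$ and that for $k\le 2M-1$ this is at most $M$; but for $k=2M-1$ your stated bound is $M+1$, not $M$. The parity argument actually yields the sharper bound $|u|\le \lfloor k/2\rfloor+1=\lceil (k+1)/2\rceil$: if $v'$ starts at an odd position of $\rho(u)$ it spans $\lceil k/2\rceil$ blocks, while if it starts at an even position it spans $1+\lceil (k-1)/2\rceil=\lceil(k+1)/2\rceil$ blocks. This sharper bound does give $|u|\le M$ for all $k\le 2M-1$ (and matches the paper's computation $2|u|\le |v'|+2$). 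With that correction the proof is complete.
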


\begin{proof}
Let $M$ be the integer from Proposition~\ref{prop-words-distinct-enum}, so~$\L^{\exparen{s_k}}_n = \L^{\exparen{t_k}}_n$ for all $n<M$, but~$\L^{\exparen{t_k}}_M\subsetneq \L^{\exparen{s_k}}_M$. We claim that $N=2M$ satisfies the requirements of the theorem. Since $M\ge 3$, we have $N\ge 6$.

First, we show that $\C^{\exparen{t_k}}_n\subseteq\C^{\exparen{s_k}}_n$ for all $n\leq N$. Let~$\pi\in\C^{\exparen{t_k}}$ be a permutation of length $n\leq N$.  By Theorem~\ref{thm-Csk-description}, we have
\[	\pi=\psi^\norigin_{v_1}\boxplus\psi^\origin_{v_2}\boxplus\cdots\boxplus\psi^\origin_{v_\ell}
\] 
for words $v_1$, $v_2$, $\dots$, $v_\ell \in\P^{\exparen{t_k}}$ with $|v_1|+\cdots +|v_\ell|=n$.

Suppose first that either $n<N$ or $\ell \ge 2$. For each $v_i$ (for $i=1,2,\dots,\ell$), take a shortest (binary) word~${w_i\in\L^{\exparen{t_k}}}$ such that~$v_i$ is a factor of $\rho(w_i)$, where $\rho : \{0,1\}^\ast\to\alphabet^\ast$ is the mapping defined in Section~\ref{sec-rightward-yearning}. Since~$v_i$ can have at most two fewer letters than~$\rho(w_i)$, we have $2|w_i|= |\rho(w_i)| \le |v_i|+2$. Furthermore, since $\sum|v_i|=n\leq N=2M$ and either $n<N$ or $\ell \ge 2$, we conclude that $|w_i| < M+1$ for each $i$ and hence~$w_i\in \L^{\exparen{s_k}}$ by Proposition~\ref{prop-words-distinct-enum}. Thus~${\rho(w_i)\in \P^{\exparen{s_k}}}$, and so~${v_i\in\P^{\exparen{s_k}}}$ for all~$i$ (since~$v_i$ is a factor of $\rho(w_i)$). Hence~$\pi\in\C^{\exparen{s_k}}$ by Theorem~\ref{thm-Csk-description} whenever $n<N$ or $\ell \ge 2$.

The proof of this half of the theorem is now reduced to considering a single $\boxplus$-indecomposable permutation of length $N$. Consider such a permutation, $\pi\in\C^{\exparen{t_k}}_N$, so $\pi=\psi_{v}^\norigin$ for a word~${v\in\P^{\exparen{t_k}}}$ of length~$M$. Since $\L^{\exparen{t_k}}_M\subsetneq \L^{\exparen{s_k}}_M$, we are done if $v=\rho(a)$ for some binary word $a\in\L^{\exparen{t_k}}_M$, since then we would have~$a\in\L^{\exparen{s_k}}_M$, $\rho(a)\in\P^{\exparen{s_k}}$, and consequently, $\pi\in\C^{\exparen{s_k}}_N$. Thus (by parity considerations), we need only consider the case where there exists a binary word $b\in\L^{\exparen{t_k}}$ of length~${M+1}$ such that~$v$ is formed by removing the first and last letters of~$\rho(b)$. This means in particular that~$v$ begins with~$\ru$ or~$\rd$. Without loss of generality suppose that~${v=\ru x}$ for some $x\in\P^{\exparen{t_k}}$. Since $|x|=2M-1$, we have $x\in\P^{\exparen{s_k}}$. Furthermore, there must exist a word of length $2M$ in $\P^{\exparen{s_k}}$ that contains $x$ as a suffix. Thus we must have either $\ru x\in \P^{\exparen{s_k}}$ or $\rd x\in \P^{\exparen{s_k}}$. Since $\pi=\psi_v^\norigin=\psi_{\ru x}^\norigin=\psi_{\rd x}^\norigin$, in either case we have $\pi\in\C^{\exparen{s_k}}_N$ and we are done.

Having established that $\C^{\exparen{t_k}}_n\subseteq\C^{\exparen{s_k}}_n$ for all $n\leq N$, we prove the second half of the theorem by exhibiting a permutation in $\C^{\exparen{s_k}}_N\setminus\C^{\exparen{t_k}}_N$.

First, fix a word~${w\in \L^{\exparen{s_k}}_M\setminus\L^{\exparen{t_k}}_M}$, which must exist because $\L^{\exparen{t_k}}_M\subsetneq \L^{\exparen{s_k}}_M$. The word $\rho(w)$ begins with $\u\ru$ or $\d\rd$, and also ends with one of these two pairs of letters. In any case, let $x$ be the word of length $2M-2\geq 4$ obtained by removing the first and last letters of~$\rho(w)$. We must have $x\notin\P^{\exparen{t_k}}$: otherwise, $x$ would be a factor of a word of the form $\rho(u)$ where $u\in\L^{\exparen{t_k}}$, and thus since the first and last letters of $\rho(w)$ are uniquely determined by the first and last letters of~$x$,~$\rho(w)$ would be a factor of $\rho(u)$, which would mean that~$w$ is a factor of~$u$, but that would imply that $w\in\L^{\exparen{t_k}}$, which we know is not the case.

Define the permutation
\[
	\pi=21\boxplus\psi_x^{\origin}.
\]
By construction, $\pi$ has length $N$. We claim that $\pi\in\C^{\exparen{s_k}}\setminus\C^{\exparen{t_k}}$. (While not necessary for the proof, it is worth remarking that even though $x\notin\P^{\exparen{t_k}}$, Proposition~\ref{prop-origin-to-norigin} shows that is possible to have $\psi_x^{\origin}=\psi_y^{\norigin}$ for some $y\in\P^{\exparen{t_k}}$, and thus we could have $\psi_x^{\origin}\in\C^{\exparen{t_k}}$; this is the reason we consider $21\boxplus\psi_x^{\origin}$ and not simply $\psi_x^{\origin}$.)

Note that we can write $\pi=\psi_{\u\ru}^\norigin \boxplus \psi_x^{\origin}$. Clearly $\u\ru\in\P^{\exparen{s_k}}$, and $x\in\P^{\exparen{s_k}}$ by its selection, so~${\pi\in\P^{\exparen{s_k}}}$ by Theorem~\ref{thm-Csk-description}.

Suppose, to the contrary, that we had~${\pi\in\P^{\exparen{t_k}}}$. By Theorem~\ref{thm-Csk-description}, this would mean that
\[
	\pi=\psi^\norigin_{v_1}\boxplus\psi^\origin_{v_2}\boxplus\cdots\boxplus\psi^\origin_{v_\ell}
\]
for words $v_1$, $v_2$, $\dots$, $v_\ell\in\P^{\exparen{t_k}}$. Lemma~\ref{lem-pins-box-indecomposable} shows that~$\psi_x^{\origin}$ is $\boxplus$-indecomposable, so the expression above must simplify to show that
\(
	\pi=21\boxplus\psi^\origin_{v_\ell}.
\)
However, Proposition~\ref{prop-long-perms-have-unique-words} states that we can only have~${\psi^\origin_{v_\ell}=\psi_x^{\origin}}$ if~${v_\ell=x}$, since $|x|\ge 4$. Thus, as $x\notin\P^{\exparen{t_k}}$, this is impossible, completing the proof that $\pi\notin\C^{\exparen{t_k}}$, and thus also the proof of the theorem.
\end{proof}

\section{Concluding Remarks}\label{sec-concluding}

\noindent{\bf Characterizing strongly algebraic classes}
Our main result, Theorem~\ref{thm-wqo-not-algebraic}, disproves False Conjecture~\ref{conj-wqo-algebraic} by showing that there are too many different enumerations of wqo permutation classes for them all to have algebraic generating functions. However, the weaker Questions~\ref{ques-lwqo-algebraic} and~\ref{ques-wqo-fb-algebraic}, which ask whether every lwqo or every finitely-based wqo permutation class has an algebraic generating function, might still be true.

Unfortunately, as there are strongly algebraic wqo-but-not-lwqo permutation classes,%
\footnote{One example of a strongly algebraic (indeed, strongly rational) permutation class that is wqo but not lwqo is the class $\Av(321,2341,3412,4123)$; see~\cite[Propositions~1.17 and~6.3]{brignall:labelled-well-q:} for the wqo and non-lwqo claims, and~\cite[Theorem~1.1]{albert:rationality-for:} for strong rationality of every wqo subclass of $\Av(321)$, including this example.}
a positive resolution to Question~\ref{ques-lwqo-algebraic} would not give a complete characterization of strongly algebraic classes, as a positive answer to False Conjecture~\ref{conj-wqo-algebraic} would have.

However, there remains a slim possibility of a tidy characterization of the strongly algebraic classes, via Question~\ref{ques-wqo-fb-algebraic}, which asks if every finitely-based wqo permutation class have an algebraic generating function. We suspect the answer to that question is negative, but if it was positive, it would imply that every finitely-based wqo permutation class was strongly algebraic (since every subclass of a finitely-based wqo class is itself finitely-based and wqo). In the other direction, every strongly algebraic class must be wqo (by elementary counting arguments), and we are not aware of a strongly algebraic class that is infinitely based.

\begin{question}
\label{ques-strong-alg-fb}
Is every strongly algebraic permutation class finitely based?
\end{question}

We also suspect the answer to Question~\ref{ques-strong-alg-fb} is also negative; that is, that there exists an infinitely based strongly algebraic permutation class, although we are not aware of such an example. Nevertheless, if the answers to Questions~\ref{ques-wqo-fb-algebraic} and~\ref{ques-strong-alg-fb} were both positive, it would follow that a class is strongly algebraic if and only if it is both wqo and finitely based.

\medskip\noindent{\bf Growth rates}
For the remainder of the conclusion we discuss a rougher notion of enumeration, that of growth rates. The \emph{upper growth rate} of the permutation class~$\C$ is defined by
\[
	\ugr(\C) = \limsup_{n\to\infty}\sqrt[n]{|\C_n|}.
\]
When the limit superior above is actually a limit (as has been conjectured is always the case%
\footnote{A weaker version of this conjecture, applying only to finitely-based permutation classes, was explicitly asked by the second author at the conference \emph{Permutation Patterns 2005}~\cite[Section~4]{elder:problems-and-co:}, but that may not be the first time it was posed. A stronger conjecture, stated in the context of ordered graphs, has been posed by Balogh, Bollob\'as, and Morris~\cite[Conjecture 8.1]{balogh:hereditary-prop:ordgraphs}.})
then we call this the \emph{proper growth rate} of the class and denote it by $\gr(\C)$.

There has been extensive research on the set of all possible growth rates of permutation classes and at what growth rates certain phenomena first appear~\cite{albert:growing-at-a-pe:,bevan:intervals-of-pe:,huczynska:grid-classes-an:,kaiser:on-growth-rates:,pantone:growth-rates-of:,vatter:permutation-cla:lambda:,vatter:small-permutati:,vatter:growth-rates-of:}. For example, the smallest non-wqo permutation class (equivalently, the infinite antichain whose downward closure has the smallest possible growth rate) has growth rate denoted by $\kappa$, the largest real root of $x^3-2x^2-1$, which is approximately $2.21$~\cite{vatter:small-permutati:}. Furthermore, it has been established that every permutation class of growth rate smaller than $\kappa$ has a rational generating function~\cite[Section~8]{albert:inflations-of-g:}. Thus it may be interesting to determine the growth rates of the classes $\C^{\exparen{s_k}}$, or perhaps more ambitiously, to answer the following question.

\begin{question}
What is the smallest real number $\omega$ such that there is a wqo permutation class of growth rate $\omega$ that fails to have an algebraic generating function?
\end{question}

It is not difficult to show that the classes $\C^{\exparen{s_k}}$ have \emph{proper} growth rates, as we now explain. This follows from a Fekete's lemma argument that seems to have first been stated in the permutation context by Arratia~\cite{arratia:on-the-stanley-:}.%
\footnote{Although it seems likely given his some of his writing~\cite[Problem~28]{chvatal:selected-combin:} that Knuth was aware of this application of Fekete's lemma in 1972.}
We use the multiplicative form of Fekete's lemma, which states that if the sequence~$(a_n)$ of positive real numbers satisfies $a_{m+n}\ge a_ma_n$ for all~$m$ and~$n$ (that is, if it is supermultiplicative), then~$\lim\sqrt[n]{a_n}$ exists and equals $\sup\sqrt[n]{a_n}$.

Note that the following result has since been generalized by Jarvis~\cite{jarvis:pin-classes:}.

\begin{proposition}
For every sequence $(s_k)$ of positive integers, the permutation class $\C^{\exparen{s_k}}$ has a growth rate.	
\end{proposition}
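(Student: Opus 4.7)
The plan is to apply the multiplicative form of Fekete's lemma stated just above the proposition. Setting $a_n = |\C_n^{\exparen{s_k}}|$, one has $a_n \geq 1$ for every $n \geq 0$, since $\C^{\exparen{s_k}}$ contains $\psi_w^\norigin$ for arbitrarily long $w \in \P^{\exparen{s_k}}$ and is closed under subpermutations. Therefore $\sqrt[n]{a_n}$ is well-defined, and it suffices to establish the supermultiplicativity $a_{m+n} \geq a_m a_n$ for all $m,n\geq 0$; Fekete's lemma then delivers $\gr(\C^{\exparen{s_k}}) = \lim \sqrt[n]{a_n}$.

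To prove supermultiplicativity I would build an injection $\Phi \colon \C_m^{\exparen{s_k}} \times \C_n^{\exparen{s_k}} \to \C_{m+n}^{\exparen{s_k}}$ using the $\boxplus$-closure observed at the end of Section~\ref{sec-construction}. For each $\tau \in \C_n^{\exparen{s_k}}$, fix once and for all a specific decomposition
\[
	\tau = \psi^\norigin_{v_1} \boxplus \psi^\origin_{v_2} \boxplus \cdots \boxplus \psi^\origin_{v_\ell}
\]
with each $v_i \in \P^{\exparen{s_k}}$ (existence is guaranteed by Theorem~\ref{thm-Csk-description}), and set
\[
	\tau^\circ = \psi^\origin_{v_1} \boxplus \psi^\origin_{v_2} \boxplus \cdots \boxplus \psi^\origin_{v_\ell},
\]
a permutation of length $n+1$. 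Define $\Phi(\sigma,\tau) = \sigma \boxplus \tau^\circ$, which has length $m+n$. Combining $\tau^\circ$ with an arbitrary decomposition $\sigma = \psi^\norigin_{u_1} \boxplus \psi^\origin_{u_2} \boxplus \cdots \boxplus \psi^\origin_{u_k}$ of $\sigma$ (again from Theorem~\ref{thm-Csk-description}) expresses $\Phi(\sigma,\tau)$ as $\psi^\norigin_{u_1} \boxplus \psi^\origin_{u_2} \boxplus \cdots \boxplus \psi^\origin_{u_k} \boxplus \psi^\origin_{v_1} \boxplus \cdots \boxplus \psi^\origin_{v_\ell}$, so Theorem~\ref{thm-Csk-description} places it in $\C_{m+n}^{\exparen{s_k}}$.

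For injectivity I would exploit that $\boxplus$ is reversible once the length of its left operand is known. Given $\pi = \sigma \boxplus \tau^\circ$ with $|\sigma| = m$, the definition of $\boxplus$ forces $\sigma$ to be the pattern of $\pi(1),\ldots,\pi(m)$, and these entries form a contiguous block of values; thus $\tau^\circ$ is reconstructed from the pattern on positions $m+1,\ldots,m+n$ together with a prepended first entry whose rank records the position of the $\sigma$-block within the value set of $\pi$. Finally, $\tau$ is recovered from $\tau^\circ$ by deleting its leftmost entry, which is the origin of $\psi^\origin_{v_1}$: this deletion turns $\psi^\origin_{v_1} \boxplus \psi^\origin_{v_2} \boxplus \cdots$ back into $\psi^\norigin_{v_1} \boxplus \psi^\origin_{v_2} \boxplus \cdots = \tau$. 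Hence $\Phi$ is injective, $a_{m+n} \geq a_m a_n$, and Fekete's lemma concludes the proof.

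I do not anticipate a substantive obstacle. The argument is essentially mechanical once the $\boxplus$-closure of $\C^{\exparen{s_k}}$ is in hand; the only care required is to fix one canonical decomposition per $\tau$ so that $\tau \mapsto \tau^\circ$ is well-defined, and any consistent choice (lexicographically minimal, maximal number of factors, etc.) suffices.
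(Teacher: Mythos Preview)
Your proposal is correct and follows essentially the same approach as the paper: both build the injection $(\sigma,\tau)\mapsto \sigma\boxplus\psi^{\origin}_{v_1}\boxplus\cdots\boxplus\psi^{\origin}_{v_\ell}$ from a chosen decomposition of $\tau$ and then appeal to Fekete's lemma. You actually supply more detail than the paper does---you make explicit the need to fix one canonical decomposition per $\tau$ so that $\tau\mapsto\tau^{\circ}$ is well-defined, and you spell out the inverse (recover $\sigma$ from the first $m$ positions, reconstruct $\tau^{\circ}$ from the remaining positions together with the value-block location, then delete its leftmost entry to get $\tau$); the paper simply asserts the map is an injection. One tiny slip: the $\boxplus$-closure discussion you cite lives at the end of Section~\ref{sec-decomp}, not Section~\ref{sec-construction}.
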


\begin{proof}
Let~$\pi\in\C^{\exparen{s_k}}_m$ and~$\sigma\in\C^{\exparen{s_k}}_n$. By Theorem~\ref{thm-Csk-description},~$\sigma$ can be expressed as
\[
	\sigma
	=
		\psi^\norigin_{w_1}\boxplus\psi^\origin_{w_2}\boxplus\cdots\boxplus\psi^\origin_{w_\ell}
\]
for words~$w_1$, $\dots$,~$w_\ell\in\P^{\exparen{s_k}}$. The mapping that sends the pair $(\pi,\sigma)$ to the permutation
\[
	\pi\boxplus\psi^\origin_{w_1}\boxplus\psi^\origin_{w_2}\boxplus\cdots\boxplus\psi^\origin_{w_\ell}\in\C^{\exparen{s_k}}_{m+n}
\]
is an injection from $\C^{\exparen{s_k}}_m\times \C^{\exparen{s_k}}_n$ to $\C^{\exparen{s_k}}_{m+n}$, and thus~$\lim\sqrt[n]{|\C^{\exparen{s_k}}_n|}$ exists by Fekete's lemma.
\end{proof}

We leave open the question of what the growth rates of the classes $\C^{\exparen{s_k}}$ actually are, and whether these growth rates even depend on the choice of the sequence $(s_k)$.

\minisec{Acknowledgements}
We are grateful to Jay Pantone for generating the heatmaps displayed in Figure~\ref{fig-heatmap-1432-4123}.

\setlength{\bibsep}{4pt}

\begin{small}
\bibliographystyle{acm}
\bibliography{Pouzet}
\end{small}

\end{document}